\theoremstyle{definition}
\newtheorem{ntn}{Notation}[section]
\newtheorem{dfn}[ntn]{Definition}
\theoremstyle{plain}
\newtheorem{lem}[ntn]{Lemma}
\newtheorem{prp}[ntn]{Proposition}
\newtheorem{thm}[ntn]{Theorem}
\newtheorem{cor}[ntn]{Corollary}
\newtheorem{alg}[ntn]{Algorithm}
\theoremstyle{remark}
\newtheorem{rmk}[ntn]{Remark}
\newtheorem{exa}[ntn]{Example}
\newcommand{\boldzero}{{\mathbf{0}}}
\newcommand{\boldone}{{\mathbf{1}}}
\newcommand{\bolda}{{\mathbf{a}}}
\newcommand{\boldu}{{\mathbf{u}}}
\newcommand{\del}{\partial}
\newcommand{\eps}{\varepsilon}
\newcommand{\Fquot}[1]{F_{#1}/\hskip-0.5ex\sim_{\varphi_{{#1}}}}
\newcommand{\ideal}[1]{{\langle#1\rangle}}
\newcommand{\into}{\hookrightarrow}
\newcommand{\onto}{\twoheadrightarrow}
\newcommand{\dotPhi}{\dot\Phi}
\newcommand{\calA}{\mathscr{A}}
\newcommand{\calB}{\mathscr{B}}
\newcommand{\frakb}{{\mathfrak{b}}}
\newcommand{\frakm}{{\mathfrak{m}}}
\newcommand{\frakp}{{\mathfrak{p}}}
\newcommand{\CC}{\mathds{C}}
\newcommand{\KK}{\mathds{K}}
\newcommand{\NN}{\mathds{N}}
\newcommand{\PP}{\mathds{P}}
\newcommand{\QQ}{\mathds{Q}}
\newcommand{\ZZ}{\mathds{Z}}
\DeclareMathOperator{\depth}{depth}
\DeclareMathOperator{\gr}{gr}
\DeclareMathOperator{\height}{height}
\DeclareMathOperator{\Hom}{Hom}
\DeclareMathOperator{\ini}{in}
\DeclareMathOperator{\lcm}{lcm}
\DeclareMathOperator{\rk}{rk}
\DeclareMathOperator{\Spec}{Spec}
\begin{document}

\title[Newton graded toric rings]
{Cohen-Macaulayness and computation\\ of Newton graded toric rings}

\author{Mathias Schulze}
\address{
M. Schulze\\
Oklahoma State University\\
Dept.\ of Mathematics\\
401 MSCS\\
Stillwater, OK 74078\\
USA}
\email{mschulze@math.okstate.edu}
\thanks{MS was supported by the College of Arts \& Sciences at Oklahoma State University through a FY 2008 Dean's Incentive Grant.}

\author{Uli Walther}
\address{
U. Walther\\
Purdue University\\
Dept.\ of Mathematics\\
150 N.\ University St.\\
West Lafayette, IN 47907\\
USA}
\email{walther@math.purdue.edu}

\begin{abstract}
Let $H\subseteq \ZZ^d$ be a positive semigroup generated by $\calA\subseteq H$, 
and let $\KK[H]$ be the associated semigroup ring over a field $\KK$. 
We investigate heredity of the Cohen--Macaulay property from $\KK[H]$ 
to both its $\calA$-Newton graded ring and to its face rings.  We show 
by example that neither one inherits in general the Cohen--Macaulay property. 
On the positive side we show that for every $H$ there exist generating sets $\calA$ for which the Newton graduation preserves Cohen--Macaulayness. This gives an elementary proof for an important vanishing result on $A$-hypergeometric
Euler--Koszul homology. As a tool for our investigations we develop an algorithm 
to compute algorithmically the Newton filtration on a toric ring.
\end{abstract}

\subjclass{14M25,16W70}

\keywords{toric ring, Newton filtration, Cohen-Macaulay}

\maketitle
\tableofcontents
\numberwithin{equation}{section}

\section{Introduction}

The motivation for this article arises from the study of
the solutions of $A$-hypergeometric systems.
These systems of linear partial differential equations have been introduced by Gel'fand, Graev, Kapranov, and Zelevinski{\u\i} \cite{GGZ87,GKZ89} as a general framework including the classical hypergeometric functions.

Given an integer $d\times n$-matrix $A=((a_{i,j}))\in\ZZ^{d\times n}$, let $D_A$ be the (complex) Weyl algebra in the variables $x=x_1,\dots,x_n$ with corresponding differential operators $\del=\del_1,\dots,\del_n$ where $\del_i=\frac{\del}{\del x_i}$ for $i=1,\dots,n$. 
Writing $(\boldu_+)_j=\max(0,\boldu_j)$ and $\boldu_-=\boldu_+-\boldu$ for $\boldu\in\ZZ^n$, the \emph{toric relations} are defined by
\[
\Box_\boldu=\del^{\boldu_+}-\del^{\boldu_-}
\]
for $\boldu\in\ker A\cap\ZZ^n$. 
For a complex parameter vector $\beta\in\CC^d$, the induced \emph{$A$-hypergeometric system} is the system partial differential equations for $f(x_1,\ldots,x_n)$ given by all \emph{Euler equations} $E_i\bullet(f)=\beta_i\cdot f$ where
\[
E_i=\sum_{j=1}^n a_{i,j}x_i\del_i
\]
for $i=1,\dots,d$, and all \emph{toric equations} $\Box_\boldu\bullet(f)=0$ for $\boldu\in\ker A\cap\ZZ^n$.
More intrinsically, one considers the $A$-hypergeometric $D_A$-module
\[
M_A(\beta)=D_A/\ideal{E-\beta,I_A}
\]
where we abbreviate $E=E_1,\dots,E_d$ and 
\[
I_A=\ideal{\Box_\boldu\mid\boldu\in\ker A\cap\ZZ^n}\subseteq R_A=\CC[\del_1,\dots,\del_n].
\]
is the \emph{toric ideal}.
The quotient of $R_A$ by the toric ideal defines the \emph{toric ring} 
\[
S_A=R_A/I_A\cong\CC[t^{\bolda_1},\dots,t^{\bolda_n}]\subseteq\CC[t_1^{\pm1},\dots,t_d^{\pm1}].
\]

A fundamental tool for studying $M_\beta(A)$ is to consider it as the
$0$-th homology of the \emph{Euler--Koszul complex} of $E=E_1,\dots,E_d$ on $D\otimes_{R_A}S_A$, \cite{MMW05}.
Considered in a commutative polynomial ring $\CC[x,\del]$, the elements $E$ form a subset of a system of parameters on $S_A[x]$.
There are two main approaches to understanding the relationship of Euler--Koszul homology (and hence $M_A(\beta)$) with the commutative Koszul complex:
Adolphson \cite{Ado94} applied grading techniques with respect to the
\emph{Newton filtration} on $S_A$ using ideas of Kouchnirenko
\cite{Kou76}; on the other hand, \cite{MMW05} introduces homological methods on the category of \emph{toric modules}, a natural $\ZZ^d$-graded category that contains all face rings of $S_A$ and their $\ZZ^d$-shifts.
Two natural questions arise:
\begin{enumerate}[(Q1)]
\item\label{q1} Is the Newton graded ring of a Cohen-Macaulay toric ring Cohen-Macaulay?
\item\label{q2} Are the face rings of a Cohen-Macaulay toric ring Cohen-Macaulay?
\end{enumerate}
Closer inspection shows that (Q\ref{q1}) does not quite make sense as
stated: speaking of a Newton filtration requires a distinguished (finite) set
of generators on the toric ring in order to form the Newton polyhedron. The (historic) default choice for $S_A$ is the column set of $A$. 
In that case, Okuyama claims a positive answer to (Q\ref{q1}) in \cite[Prop.~3.1]{Oku06a} and \cite[Prop.~3.4]{Oku06b} and derives certain vanishing properties
for Euler--Koszul homology. We shall show that these claims are
incorrect. However, we also show that for suitable choices of
generating sets the answer to (Q\ref{q1}) is positive, 
we prove the vanishing of Euler--Koszul homology on Cohen--Macaulay rings, 
and we describe an algorithm to compute the Newton filtration induced
by arbitrary (finite) generating sets.

Regarding (Q\ref{q2}) we will show that in general the answer is
negative as well, and elaborate on a criterion discussed in
\cite{HT86} regarding Cohen--Macaulayness of toric rings.

The following notation will be used throughout this note.

\begin{ntn}\label{49}
We let $\NN=\{0,1,2,\ldots\}$ be the natural numbers and denote by
$\QQ_+$ the non-negative rational numbers.

Let $H$ be an \emph{affine semigroup}.
By this we mean a finitely generated, commutative, torsion-free, cancelative monoid.
We assume that $H$ is \emph{positive} which means that $0$ is the only invertible element.
By Hochster's transformation \cite[\S2]{Hoc72}, $H$ may be viewed as a submonoid of $\NN^d$ for some $d\in\NN$.
The Grothendieck group $\ZZ H$ of $H$ is isomorphic to $\ZZ^r$ where $r=\rk H$ and we can view $H\subseteq\ZZ H$.
We write $\KK H$ for the $\KK$-vector space $\KK\otimes_\ZZ\ZZ H$ and identify $h\in H$ with $1\otimes h\in\KK H$ if $\KK$ has characteristic zero.

Let $\KK$ be an arbitrary field and $\KK[H]$ the semigroup ring of $H$ over $\KK$. 
The inclusion $H\subseteq\ZZ H\cong\ZZ^r$ (or $H\subseteq\NN^d$) gives rise to an inclusion $\KK[H]\subseteq\KK[\ZZ^r]\cong\KK[t_1^{\pm1},\dots,t_r^{\pm1}]$ (or
$\KK[H]\subseteq\KK[\NN^d]\cong\KK[t_1,\dots,t_d]$). 
This makes $\Spec(\KK[H])$ a toric variety and $\KK[H]$ a \emph{toric ring} endowed with a canonical $\ZZ H$-grading given by $\deg(t^h)= h$ for $h\in H$. 
Generally, for a subset $S\subseteq \ZZ^d$, we let $\KK[S]:=\bigoplus_{s\in S}\KK\cdot t^s$.
We denote by $\frakm=\frakm_H=\KK[H\smallsetminus\{0\}]$ the unique maximal $\ZZ H$-graded ideal which defines the vertex of $\Spec(\KK[H])$.

Generally, $\calA$ will be a finite set of semigroup generators for $H$. 
Then $\calA$ defines a presentation 
\[
R_\calA=\KK[(y_a)_{a\in\calA}]=\KK[\NN^\calA]\onto\KK[H]=S_\calA
\]
with kernel $I_\calA$ and we write $y_i=y_{a_i}$ for $i=1,\dots,n$ if $\calA=\{a_1,\dots,a_n\}$. 
In the situation of hypergeometric systems, the given matrix
$A=(a_1,\dots,a_n)$ replaces the generating set $\calA$ and we write
$\del_i$ rather than $y_i$.

Consider the rational projective $r$-space 
\[
\PP_\QQ^H=\PP_\QQ(\QQ H\times \QQ)
\]
containing $\QQ H$ via the embedding $qh\mapsto (qh:1)$. Since $H$ is
positive, there is a linear functional $h\in\Hom_\QQ(\QQ H,\QQ)$ such
that $h^{-1}(0)\cap\QQ_+ H=\{0\}$ while $h(H)\geq 0$. 
Pick a rational weight vector $L\colon \calA\to\QQ$ and choose
$0>\eps\in\QQ$ such that $L(a)<0$ implies $\eps>h(a)/L(a)$. 
This can be done since $\calA$ is finite, cf.~\cite{SW05}. 
The convex hull in $\PP_\QQ^H\smallsetminus
h^{-1}(-\eps)$ of the origin $0=(0:1)$ and the points $a^L:=(a:L(a))$,
$a\in\calA$, is the \emph{$(\calA,L)$-polyhedron}. 
Its simplicial complex $\Phi^L_\calA$ of faces not containing $(0:1)$ is the
\emph{$(\calA,L)$-umbrella}, cf.~\cite[\S 2.3]{SW05}.
The dimension $\dim(\tau)$ of $\tau\in\Phi^L_\calA$ is its topological dimension as
a boundary component of $\Delta^L_\calA$. 
We denote by $\Phi^{L,k}_\calA\subseteq\Phi^L_\calA$ the subcollection of faces of
dimension $k$, hence a \emph{facet} is of dimension $r-1$. 
We reserve $\sigma$ for elements of $\Phi^{L,r-1}_\calA$ and $\tau$ for general
elements in $\Phi^L_\calA$.

The\emph{interior $(\calA,L)$-umbrella} $\dotPhi^L_\calA$ consists of the faces
$\tau\in\Phi^L_\calA$ for which there is no facet $\sigma$ of
$\Delta^L_\calA$ with $\tau\subseteq\sigma\ni 0$. One may view
$\dotPhi^L_\calA$ as the quotient complex of $\Phi^L_A$ by the
subcomplex of faces contained in its boundary as a piecewise linear
$(r-1)$-manifold. 

Consider the special weight vectors $\boldzero=(0)_{a\in\calA}$ and
$\boldone=(1)_{a\in\calA}$. Then $\Delta^\boldone_\calA$ is the convex
hull of $\calA$ and $0$, while $\Delta^\boldzero_\calA$ is the closure
in $\PP_\QQ^H$ of the positive rational cone $C_H=\QQ_+ H$. 
One can identify $\Phi^\boldzero_\calA$ with the simplicial complex of a cross section of the cone $C_H$. 
If $\calA$ is contained in a hyperplane in $\QQ H$ then $\KK[H]$ is
the coordinate ring of a projective variety and we say that $\calA$ is
\emph{projective}. 

Let $\tau\in\Phi^L_\calA$. We call the cone $C_\tau=\QQ_+\tau$ over
$\tau$ a \emph{face cone} in general and \emph{facet cone} if $\tau$
is a facet. We abbreviate $H_\tau=H\cap C_\tau$ and
$\calA_\tau=\calA\cap C_\tau$, and we denote by $\frakm_\tau$ the
maximal $\ZZ H$-graded ideal $\KK[H_\tau\smallsetminus\{0\}]$ in
$\KK[H_\tau]$.  For a facet $\sigma\in\Phi^{L,r-1}_\calA$, we denote
by $\ell_\sigma$ the unique linear form on $\QQ H$ such that
$\max\ell_\sigma(\Delta^\boldone_\calA)=1$ and
$\ell_\sigma^{-1}(1)\cap\Delta^\boldone_\calA=\sigma$.
\end{ntn}

\section{A necessary criterion for Cohen--Macaulayness of toric rings}

In this section, we discuss Cohen--Macaulayness of affine semigroup
rings following Hoa and Trung \cite{HT86,HT88}. We give a simplified
proof for one of their results and use it to find a
negative answer to Question (Q\ref{q2}).

The following result is classical and a special case of Proposition~\ref{17}.

\begin{lem}\label{9}
The set of $\ZZ H$-graded primes in $\KK[H]$ is in bijection with
$\Phi^\boldzero_\calA$. 
Each $\tau\in \Phi^\boldzero_\calA$ corresponds to a prime ideal $\frakp_\tau=\KK[H\smallsetminus C_\tau]$.\qed
\end{lem}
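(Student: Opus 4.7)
My approach is to translate the lemma into combinatorics of submonoids of $H$ and then match the relevant submonoids with face cones of $C_H=\QQ_+H$.

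First I would observe that every $\ZZ H$-graded ideal of $\KK[H]$ is of the form $\KK[S]=\bigoplus_{h\in S}\KK\cdot t^h$ for a subset $S\subseteq H$, and that $\KK[S]$ is an ideal precisely when $S+H\subseteq S$; equivalently, $T:=H\smallsetminus S$ satisfies $h_1+h_2\in T\Rightarrow h_1,h_2\in T$ for all $h_1,h_2\in H$. Since primality of a graded ideal is detected on homogeneous (i.e.\ monomial) products, $\KK[S]$ is a proper prime iff, in addition, $T$ is additively closed and contains $0$. In summary, $\ZZ H$-graded primes of $\KK[H]$ are in bijection with \emph{saturated submonoids} $T\subseteq H$, i.e., submonoids satisfying $h_1+h_2\in T\Rightarrow h_1,h_2\in T$ for all $h_1,h_2\in H$.

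Second, I would set up the bijection between saturated submonoids of $H$ and faces of $C_H$ (equivalently, elements of $\Phi^\boldzero_\calA$, which is by definition the simplicial complex of a cross-section of $C_H$). The map $\tau\mapsto H_\tau=H\cap C_\tau$ is straightforward: closure under addition is immediate, and the face property of $C_\tau\subseteq C_H$ supplies saturation. The inverse sends $T$ to the rational cone $F:=\QQ_+T\subseteq C_H$; one must verify (a) that $F$ is a face of $C_H$ and (b) that $F\cap H=T$. For (a), given $x,y\in C_H$ with $x+y\in F$, clear denominators to obtain $N\in\NN_{>0}$ with $Nx,Ny\in H$ and $N(x+y)\in T$; saturation of $T$ then forces $Nx,Ny\in T$, i.e., $x,y\in F$. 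For (b), any $h\in F\cap H$ satisfies $Nh\in T$ for some $N>0$, and iterated application of the saturation property to $Nh=h+(N-1)h$ yields $h\in T$.

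The main technical hurdle is part (a) of the second step: lifting the integral saturation of $T$ to a face property of the rational cone $\QQ_+T$. This works because $H$ is cofinal in $C_H$ (denominators of any element of $C_H$ can be cleared into $H$), which reduces the rational face condition to the integral statement already in place. Composing the two steps delivers the stated bijection $\tau\leftrightarrow\frakp_\tau=\KK[H\smallsetminus C_\tau]$.
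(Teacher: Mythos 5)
The paper does not actually prove Lemma~\ref{9}: it labels the statement ``classical'' and defers to Proposition~\ref{17}, which in turn is cited from \cite[Thm.~2.15]{SW05} and handles the general $L$-weighted degeneration $\gr^L(\KK[F_\calA]/I_\calA)$. Your proof is a self-contained elementary argument for the special case $L=\boldzero$, and it is correct.

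Your route differs from the paper's in a useful way. You exploit that the $\ZZ H$-grading on $\KK[H]$ is \emph{fine} (each graded piece is at most one-dimensional), so $\ZZ H$-graded ideals are precisely the spans $\KK[S]$ of subsets $S\subseteq H$; the ideal condition $S+H\subseteq S$ translates, via $T=H\smallsetminus S$, to the downward-closure property $h_1+h_2\in T\Rightarrow h_1,h_2\in T$, and graded primality (legitimately checked on homogeneous elements since $\ZZ H$ is torsion-free) adds additive closure, so $T$ is exactly a face submonoid of $H$. The denominator-clearing argument converting the integral face property of $T$ into the rational face property of $\QQ_+T$, and the iterated-saturation argument for $\QQ_+T\cap H=T$, are both correct, and together they give the bijection with faces of $C_H$, i.e., with $\Phi^\boldzero_\calA$. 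The trade-off: the paper's Proposition~\ref{17} handles an entire family of non-reduced degenerations at once (where the graded primes come from the umbrella $\Phi^L_\calA$ and pick up extra monomial generators), at the cost of importing \cite{SW05}; your argument applies only to $L=\boldzero$ (where the associated graded ring is $\KK[H]$ itself), but is elementary, transparent, and free of external dependencies. One minor caveat: the term ``saturated submonoid'' is nonstandard for what is usually called a \emph{face} of $H$; aligning with that terminology would avoid confusion with the standard notion of a saturated (i.e.\ normal) semigroup.
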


\begin{dfn}\label{10}
Let $\Bar H$ be the set of $h\in\ZZ H$ for which there exists
$\calB\subseteq H$, $0\notin \calB$, such that
$\calB_\tau\ne\emptyset$ for all $\tau\in\Phi^{\boldzero,r-2}_\calA$
while $h+\calB\subseteq H$.
\end{dfn}

For example, if $H=2\NN+3\NN\subseteq \NN$ then $\bar H=\ZZ$, where
$k\in\bar H$ is witnessed by $\calB_k=\{3-k\}$ and the equation
$k+(3-k)=3\in H$. 
Note that these relations encode cosets $t^k+\KK[H]$ in $H^1_{\ideal{\calB}}(\KK[H])=\KK[\ZZ H]/\KK[H]$ which are non-zero for $k\le1$.  

Note that $\bar H\supseteq H$ and if $r\ge2$ then $\bar H\subseteq C_H$.
Indeed, for all $h\in\bar H$ and for any $\tau\in\Phi^{\boldzero,r-2}_\calA$ there
is a $b\in\calB\cap C_\tau$ with $h+b\in H$. 
So any linear functional that is non-negative on $H$ and vanishes on $C_\tau$ must be non-negative on $h$.

Part \eqref{14c} of the following result (cf.~\cite[Cor.~2.2]{HT86}, \cite[Lem.~4.3]{HT88}) states a necessary condition for $\KK[H]$ to be Cohen--Macaulay; our contribution is a simplified proof. 
When combined with the topological description of the Cohen--Macaulayness of $\KK[\Bar
H]$ from \cite{HT86,HT88}, this characterizes Cohen--Macaulayness of $\KK[H]$.

\begin{prp}\label{14}
Let $\calB\subseteq H$ with $0\not\in\calB\neq\emptyset$ and put $\frakb=\ideal{t^b\mid b\in\calB}\subseteq \KK[H]$.
\begin{enumerate}
\item\label{14a} $H^1_\frakm(\KK[H])\subseteq\KK[\Bar H]/\KK[H]$ and $H^1_\frakm(\KK[H])\subseteq H^1_\frakb(\KK[H])$.
\item\label{14b} For $h\in\ZZ H\backslash H$ with $h+\calB\subseteq H$, $h$ induces a non-zero element in $H^1_\frakb(\KK[H])$.
In particular, $\KK[\Bar H]/\KK[H]\subseteq\sum_\calB H^1_\frakb(\KK[H])$ with $\calB$ as in Definition \ref{10}.
\item\label{14c} If $\KK[H]$ is Cohen--Macaulay of dimension $r\ge2$ then $H=\Bar H$.
\item\label{14d} For $\KK[H]$ of dimension $r=2$, $\Bar H/H=H^1_\frakm(\KK[H])$.
\end{enumerate}
\end{prp}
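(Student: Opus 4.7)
The strategy is to compute $H^1_\frakb(\KK[H])$ explicitly via the \v Cech complex on the monomials $\{t^b\}_{b\in\calB}$. Since $\KK[H]$ is a domain, this yields $H^0_\frakb(\KK[H])=0$ and
\[
H^1_\frakb(\KK[H])=\biggl(\bigcap_{b\in\calB}\KK[H]_{t^b}\biggr)\Big/\KK[H]=\KK[\widetilde H_\calB]\big/\KK[H],
\]
where $\widetilde H_\calB:=\{h\in\ZZ H\mid\forall b\in\calB,\ \exists k_b\in\NN,\ h+k_bb\in H\}$. The $\ZZ H$-grading of each $\KK[H]_{t^b}=\KK[H+\NN(-b)]$ turns the intersection, computed inside $\KK[\ZZ H]$, into the monomial subring supported on $\bigcap_b(H+\NN(-b))=\widetilde H_\calB$. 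Applied with $\calB=\calA$, the same formula gives $H^1_\frakm(\KK[H])=\KK[\widetilde H_\calA]/\KK[H]$.

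For part \eqref{14a}, the inclusion $\widetilde H_\calA\subseteq\Bar H$ comes from setting, for $h\in\widetilde H_\calA\setminus H$, the witness $\calB_h:=\{k_a\,a\mid a\in\calA\}\subseteq H\setminus\{0\}$; it meets every facet cone because $\calA$ does (each facet cone of $C_H$ is the $\QQ_+$-hull of the rays it contains, and every ray contains an element of $\calA$). The inclusion $\widetilde H_\calA\subseteq\widetilde H_\calB$ follows by writing each $b\in\calB$ as $b=\sum_{a\in\calA}n_aa$, choosing some $a_0$ with $n_{a_0}\ge1$, and noting that $h+Kb=(h+Kn_{a_0}a_0)+\sum_{a\ne a_0}Kn_aa\in H$ for $K$ sufficiently large.

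For part \eqref{14b}, $h+\calB\subseteq H$ directly gives $h\in\widetilde H_\calB$ (take $k_b=1$), so $h\notin H$ produces a non-zero class in $H^1_\frakb(\KK[H])$. The ``in particular'' statement follows by choosing, for each $h\in\Bar H\setminus H$, the $\calB$ witnessing $h\in\Bar H$.

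For part \eqref{14c}, the condition on $\calB$ means by Lemma~\ref{9} that $\frakb$ lies in no height-one prime, so $\height(\frakb)\ge2$. Cohen--Macaulayness of $\KK[H]$ then forces $\depth_\frakb\KK[H]=\height(\frakb)\ge2$, hence $H^1_\frakb(\KK[H])=0$; part \eqref{14b} now yields $\Bar H\subseteq H$, and the reverse inclusion is trivial.

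For part \eqref{14d}, when $r=2$ the codimension-one faces in $\Phi^\boldzero_\calA$ are exactly the two rays of $C_H$, so any admissible $\calB$ satisfies $\QQ_+\calB=C_H$ and in particular $\calA\subseteq\QQ_+\calB$. For $h\in\Bar H$ with witness $\calB$ and any $a\in\calA$, clear denominators to write $Na=\sum_{b\in\calB}n_bb$ with $N\ge1$ and some $n_{b_0}\ge1$; then
\[
h+Na=(h+b_0)+\Bigl((n_{b_0}-1)b_0+\sum_{b\ne b_0}n_bb\Bigr)\in H,
\]
so $h\in\widetilde H_\calA$. Combined with part \eqref{14a} this yields $\Bar H=\widetilde H_\calA$, i.e.\ $\Bar H/H=H^1_\frakm(\KK[H])$.

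The main technical step is the graded \v Cech computation and the identification of $\bigcap_b\KK[H]_{t^b}$ with $\KK[\widetilde H_\calB]$; the remaining assertions then reduce to short monoid-theoretic bookkeeping. The argument for part \eqref{14d} does not generalize directly to $r\ge3$, since $\calB$ meeting every facet cone need not generate $C_H$ as a $\QQ_+$-cone.
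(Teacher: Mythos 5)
Your proof is correct and follows essentially the paper's line of argument, with a couple of cosmetic streamlinings. The identification $H^1_\frakb(\KK[H])=\bigl(\bigcap_{b\in\calB}\KK[H]_{t^b}\bigr)/\KK[H]=\KK[\widetilde H_\calB]/\KK[H]$ is exactly what the paper uses for the second inclusion in \eqref{14a}, and your proof of the first inclusion is the same \v Cech-cocycle observation, phrased via $\widetilde H_\calA$. For \eqref{14b} the paper instead applies $R\Gamma_\frakb$ to a short exact sequence of $\KK[H]$-modules; your reading of $t^h$ as a class in $\KK[\widetilde H_\calB]/\KK[H]$ is the more direct version of the same fact. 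For \eqref{14c} both arguments deduce $\height\frakb\ge2$ from the combinatorial condition on $\calB$ together with Lemma~\ref{9} (you compress the step where one notes that minimal primes of the $\ZZ H$-graded ideal $\frakb$ must themselves be $\ZZ H$-graded, hence of the form $\frakp_\tau$, but the content is identical). The only genuine divergence is \eqref{14d}: the paper observes that, when $r=2$, any admissible $\calB$ meets both extremal rays, so $\frakb$ is $\frakm$-primary; hence $H^1_\frakb=H^1_\frakm$, and the statement drops out from \eqref{14a} and \eqref{14b}. You instead give a hands-on monoid argument, writing $Na=\sum_b n_bb$ and showing $h+Na\in H$, to prove $\Bar H\subseteq\widetilde H_\calA$ directly. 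Both are valid; yours is a bit more self-contained and elementary, the paper's is a cleaner corollary of the preceding parts once the $\frakm$-primary observation is made. Your closing remark about why the cone-generation step breaks down for $r\ge3$ is correct and worth keeping in mind.
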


\begin{proof}
Any $\ZZ H$-graded element of $H^1_\frakm(\KK[H])$ is the coset modulo $\KK[H]$ of an element $(t^{h_1-c_1a_1},\ldots,t^{h_k-c_ka_k})$ where $a_1,\ldots,a_k\in\calA$ generate $H$ and where $c_i\in\NN$ and $h_i\in H$ with $h_1-c_1a_1=h_j-c_ja_j$ for all $j$.
So $h_1-c_1a_1\in\Bar H$ with $\calB=\{a_1,\dots,a_k\}$ which proves the first inclusion.

As $\KK[H]$ is a domain, $H^1_\frakb(\KK[H])=\left(\bigcap_{b\in\calB}\KK[H+\ZZ b]\right)/\KK[H]$ while $H^1_\frakm(\KK[H])=\left(\bigcap_{a\in\calA}\KK[H+\ZZ a]\right)/\KK[H]$. 
Pick $b\in\calB$. 
Then $0\ne b=\sum_{a\in\calA}k_aa$ with $k_a\in\NN$ implies $H+\ZZ b\supseteq H+\ZZ a$ for any $a$ with $k_a> 0$. 
Thus, $\bigcap_{b\in\calB}\KK[H+\ZZ b]$ contains $\bigcap_{a\in\calA} \KK[H+\ZZ a]$ and the first claim follows.

By definition $(\KK[H]+\KK[H]\cdot t^h)/\KK[H]$ is $\frakb$-torsion. 
Applying $R\Gamma_\frakb$ to the short exact sequence 
\[
0\to\KK[H]\to\KK[H]+\KK[H]\cdot t^h\to(\KK[H]+\KK[H]\cdot t^h)/\KK[H]\to 0
\]
yields
\[
\underbrace{\Gamma_\frakb(\KK[H]+\KK[H]\cdot t^h)}_{=0}
\to
\underbrace{(\KK[H]+\KK[H]\cdot t^h)/\KK[H]}_{\ni t^h+\KK[H]}
\to H^1_\frakb(\KK[H]),
\]
which proves the second claim.

By definition, $\KK[\Bar H]/\KK[H]$ is $\frakb$-torsion. 
Applying $R\Gamma_\frakb$ to the short exact sequence
\[
0\to\KK[H]\to\KK[\Bar H]\to\KK[\Bar H]/\KK[H]\to 0
\]
yields
\[
0=\Gamma_\frakb(\KK[\Bar H])
\to\KK[\Bar H]/\KK[H]
\to H^1_\frakb(\KK[H]),
\]
which proves the second claim.

For the third claim assume that $\KK[H]$ is Cohen--Macaulay with
$\dim(\KK[H])\ge2$. 
Let $\calB\subseteq H$ with $0\notin \calB\not =\emptyset$. 
Then $\frakb\ne0$ and hence $\dim(\KK[H]/\frakb)<r=\dim(\KK[H])$ as $\KK[H]$ is a domain. 
Since $\frakb$ is monomial, it is $\ZZ^d$-graded. So by Lemma \ref{9} any associated prime of $\frakb$ is of the form $\frakp_\tau$ with $\tau\in\Phi^\boldzero_\calA$. 
Hence there is an inclusion
\[
\KK[H_\tau]=\KK[H]/\frakp_\tau\into\KK[H]/\frakb.
\]
Now let $h\in\bar H$ and consider a $\calB$ corresponding to $h$ in Definition~\ref{10}. 
Then $\frakb$ contains a non-zerodivisor on $\KK[H_{\tau'}]$ for every $\tau'\in\Phi^{\boldzero,r-2}_\calA$. 
On the other hand, $\frakb$ kills $\KK[H_\tau]\subseteq\KK[H]/\frakb$, so $\dim(\tau)\le r-3$. 
This being so for all associated primes of $\frakb$, $\height\frakb\geq 2$. 
As $\KK[H]$ is Cohen--Macaulay, $2\le\height\frakb=\depth\frakb$ and hence $H^1_\frakb(\KK[H])=0$. 
It follows from \eqref{14b} that we therefore must have $h\in H$ and so $\bar H=H$ as claimed.

Finally assume that $r=2$. 
Then any $\calB$ as in Definition \ref{10} meets every $C_\tau$,  $\tau\in\Phi_\calA^{\boldzero,0}$, and in particular the extremal rays of $C_H$.
This means that $\frakb$ is $\frakm$-primary and hence $\KK[\Bar H]/\KK[H]\subseteq H_\frakm^1(\KK[H])$ by \eqref{14b}.
Then the last claim follows from the first inclusion in \eqref{14a}.
\end{proof}

\begin{rmk}
In fact, $\bar H$ is a subsemigroup of $\ZZ H$ and if $r=2$ then $\KK[\bar H]$ is
the ideal transform of $\KK[H]$ relative to $\frakm$, cf.~\cite{BS}.
\end{rmk}

Investigating cases where the criterion of Proposition \ref{14} applies to facets, but not all, of $H$, we found the following example showing that the answer to Question (Q\ref{q2}) is negative: 
the facet rings of a Cohen--Macaulay toric ring are not Cohen--Macaulay in general, even in the projective case.

\begin{exa}\label{23}
Consider the affine semigroup $H$ generated by 
\begin{equation}\label{30}
\calA=\{a_1,\dots,a_6\}=
\left\{
\begin{pmatrix}1\\0\\0\end{pmatrix},
\begin{pmatrix}1\\1\\0\end{pmatrix},
\begin{pmatrix}1\\3\\0\end{pmatrix},
\begin{pmatrix}1\\4\\0\end{pmatrix},
\begin{pmatrix}1\\1\\1\end{pmatrix},
\begin{pmatrix}1\\4\\1\end{pmatrix}
\right\}.
\end{equation}
\begin{figure}[ht]
\caption{The $(\calA,\boldone)$-polyhedron for Example~\ref{23}}
\begin{center}
\setlength{\unitlength}{0.65mm}
\setlength{\unitlength}{1mm}
\begin{picture}(100,35)(0,0){
\put(20,5){
\put(0,0){\vector(1,0){90}}
\put(0,0){\vector(0,1){30}}
\put(0,0){\vector(-1,-1){7}}
\multiput(0,0)(20,0){5}{\path(0,1)(0,-1)}
\multiput(0,0)(0,20){2}{\path(1,0)(-1,0)}
\put(-2,0){\makebox(0,0)[br]{$0$}}
\put(20,-2){\makebox(0,0)[t]{$1$}}
\put(40,-2){\makebox(0,0)[t]{$2$}}
\put(60,-2){\makebox(0,0)[t]{$3$}}
\put(80,-2){\makebox(0,0)[t]{$4$}}
\put(89,-1){\makebox(0,0)[t]{$t_2$}}
\put(-2,20){\makebox(0,0)[r]{$1$}}
\put(-1,29){\makebox(0,0)[r]{$t_1$}}
\put(-7,-6){\makebox(0,0)[br]{$t_3$}}
\put(0,0){\texture{
88888888 88888888 88888888 88888888 88888888 88888888 88888888 88888888 
88888888 88888888 88888888 88888888 88888888 88888888 88888888 88888888 
88888888 88888888 88888888 88888888 88888888 88888888 88888888 88888888 
88888888 88888888 88888888 88888888 88888888 88888888 88888888 88888888}
\shade\path(0,20)(80,20)(66,6)(6,6)(0,20)\thicklines\path(0,20)(80,20)(66,6)(6,6)(0,20)}
\put(0,20){\circle*{1}\put(1,1){\makebox(0,0)[bl]{$a_1$}}}
\put(20,20){\circle*{1}\put(0,1){\makebox(0,0)[b]{$a_2$}}}
\put(60,20){\circle*{1}\put(0,1){\makebox(0,0)[b]{$a_3$}}}
\put(80,20){\circle*{1}\put(0,1){\makebox(0,0)[b]{$a_4$}}}
\put(6,6){\circle*{1}\put(1,1){\makebox(0,0)[bl]{$a_5$}}}
\put(66,6){\circle*{1}\put(1,0){\makebox(0,0)[tl]{$a_6$}}}
\dottedline{1}(0,0)(24,6)
\dottedline{2}(24,6)(80,20)
\dottedline{1}(0,0)(6,6)
\dottedline{1}(0,0)(66,6)
}
}
\end{picture}
\end{center}
\end{figure}
Note that $\ell=(1,0,0)$ equals $1$ on $\calA$, so  $H$ is projective.
The linear form $\ell_1=(0,0,1)$ defines the face
$\tau_1\in\Phi^{\boldzero,1}_\calA$ spanned by
$\calA'=\{a_1,a_2,a_3,a_4\}$; the corresponding submonoid $H_{\tau_1}$ of $H$ is $\NN\calA'$.
With $\calB=\{a_1,a_4\}$,
\[
2a_3-a_4=2a_2-a_1=(1,2,0)\in\Bar H_{\tau_1}\smallsetminus H_{\tau_1}
\]
and so $\KK[H_{\tau_1}]$ is not Cohen-Macaulay, as is well-known.

We now test whether $(1,2,0)\in\bar H$.
Inspection shows that $\Phi^{\boldzero,1}_\calA=\{\tau_1,\dots,\tau_4\}$ where
\[
\tau_1\ni a_1,a_2,a_3,a_4,\quad\tau_2\ni a_1,a_5,\quad\tau_3\ni a_4,a_6,\quad\tau_4\ni a_5,a_6
\]
are defined by the functionals
\[
\ell_1=(0,0,1),\ \ell_2=(0,1,-1),\ \ell_3=(1,-1/4,0),\ \ell_4=(1,0,-1).
\]
So any set $\calB$ to be used in Definition \ref{10} must contain an element $b\in\NN a_5+\NN a_6$.
Consider a relation $(1,2,0)+kb\in H$ with $k\in\NN$.
Note that $\ell_1(a_5)=\ell_1(a_6)$ while $\ell_1(\tau_1)=0$ and so $a_5$
and $a_6$ must appear with opposite coefficients. 
Moving the $a_5$- and $a_6$-terms to one side, one obtains a relation $(1,2,0)+k(a_6-a_5)\in H_{\tau_1}$ where now $k\in\ZZ$.
As $H_{\tau_1}\cap\ell^{-1}_1(1)=\calA'$ we find that
$(1,2,0)+k(a_6-a_5)=(1,2,0)+k(0,3,0)\in\calA'$ which is clearly
impossible. It follows that $(1,2,0)\notin\bar H$ and so
$H^1_\frakm(\KK[H])$ is zero in degree $(1,2,0)$. 

We now show that $\KK[H]$ is actually a Cohen--Macaulay ring. To this
end consider the ideal $J$ of $\KK[H]$ given by
\begin{equation}\label{48}
J=\ideal{I_{\calA'},y_4y_5-y_2y_6,y_3y_5-y_1y_6}.	
\end{equation}
Obviously, $J$ is contained in the toric ideal $I_\calA$, and
$J+\ideal{y_1,y_4+y_5,y_6}$ is $\frakm$-primary since its residue ring
is spanned by the monomials
\[
1,\quad y_2,\quad y_3,\quad y_5,\quad y_2^2,\quad y_2y_5,\quad y_3^2.
\]
Hence $\dim(\KK[H]/J)=3$ and so $I_\calA$ is a minimal prime of
$J$. Below we will show that $R_\calA/J$ is Cohen--Macaulay. This implies
that every associated prime of $J$ is of dimension $3$. Thus,
$J=I_\calA$ provided that the two ideals have the same degree. 
The simplicial volume of $\calA\cup\{0\}$ is $7$ and equals the degree of $I_\calA$. On the other hand, $\deg(R_\calA/J)=\deg(R_\calA/(J+\ideal{y_1,y_4+y_5,y_6}))$ is also $7$, and it follows that $J=I_\calA$.

We proceed to showing that $J$ is a Cohen--Macaulay ideal. 
The following is certainly Gr\"obner folklore but we don't know a reference. 

\begin{lem}
Suppose $J$ is an ideal in a polynomial ring $R=\KK[y_1,\ldots,y_n]$ and let $\le$ be a term order on $R$. 
Write $\ini_\le(J)$ for the initial ideal of $J$ under $\le$. 
If $y_n$ is not a zerodivisor on $R/\ini_\le(J)$
then $y_n$ is a non-zerodivisor on $R/J$.
\end{lem}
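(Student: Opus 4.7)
The plan is a short proof by contradiction using the division algorithm with respect to a Gröbner basis of $J$. Suppose $f\in R$ satisfies $y_nf\in J$; we must show $f\in J$.

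First, fix a Gröbner basis $G$ of $J$ with respect to the term order $\le$ and run the division algorithm to write $f = h + f'$, where $h\in J$ and $f'$ is a $\KK$-linear combination of monomials none of which lies in $\ini_\le(J)$; such an $f'$ is the normal form of $f$ modulo $J$, and $f\in J$ is equivalent to $f'=0$. Assume for contradiction that $f'\ne 0$. Since $h\in J$, we still have $y_nf' = y_nf - y_nh \in J$, and hence $\ini_\le(y_nf') \in \ini_\le(J)$.

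Next, because $y_n$ is a single variable we have $\ini_\le(y_nf') = y_n\cdot\ini_\le(f')$. Thus $y_n\cdot\ini_\le(f')$ is a monomial lying in the monomial ideal $\ini_\le(J)$. The hypothesis that $y_n$ is a non-zerodivisor on $R/\ini_\le(J)$ means exactly that $y_ng\in\ini_\le(J)$ implies $g\in\ini_\le(J)$ for every $g\in R$; applying this to $g = \ini_\le(f')$ gives $\ini_\le(f')\in\ini_\le(J)$. This contradicts the construction of $f'$, whose monomials all lie outside $\ini_\le(J)$. Hence $f'=0$ and $f=h\in J$, as required.

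There is no real obstacle here: the only ingredients are the existence of the normal form (standard Gröbner theory), the triviality $\ini_\le(y_nf')=y_n\ini_\le(f')$, and the equivalence between "non-zerodivisor on a quotient by a monomial ideal" and a divisibility condition on monomials. The same argument applies verbatim to any variable, and indeed to any element whose initial form is a non-zerodivisor on $R/\ini_\le(J)$.
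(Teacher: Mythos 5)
Your proof is correct and is essentially the paper's argument: both replace $f$ by its normal form relative to a Gr\"obner basis, use that $\ini_\le(y_nf)=y_n\ini_\le(f)\in\ini_\le(J)$ together with the hypothesis to conclude $\ini_\le(f)\in\ini_\le(J)$, and then invoke the definition of normal form to force $f=0$. The only cosmetic difference is that you phrase the last step as a contradiction while the paper phrases it directly.
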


\begin{proof}
Let $G$ be a reduced $\le$-Gr\"obner basis for $J$ and suppose $y_nf\in J$ for some $f\in R$. 

If $f-f'\in J$ for some second element $f'\in R$ then $y_nf\in J$ if and only if $y_nf'\in J$ while of course $f\in J$ if and only if $f'\in J$. In particular, we may assume that $f$ is equal to its $\le$-normal form relative to $G$.

As $y_nf\in J$ we have $\ini_\le(y_nf)\in\ini_\le(J)$. 
The hypothesis implies that $\ini_\le(f)\in\ini_\le(J)$. 
Hence either $f=0$, or $f$ can be $\le$-reduced relative to $G$. As $f$ is in normal form, $f=0$.
\end{proof}

The lemma implies that $y_6$ is a non-zerodivisor on $R_\calA/J$. Indeed, one may verify by hand that the four generators
\[
y_2y_3-y_1y_4,\quad y_2^3-y_1^2y_3,\quad y_3^3-y_2y_4^2,\quad
y_3^2y_1-y_2^2y_4
\]
of $I_{\calA'}$ together with the two further generators of $J$ in \eqref{48} form a Gr\"obner basis for $J$ under the graded
reverse-lexicographic order. Since the initial terms of $J$ do not involve $y_6$ the desired conclusion follows.

It hence suffices to show that the two-dimensional quotient
\[
S'=R_\calA/(J+\ideal{y_6})\cong \KK[H_{\tau_1}][y_5]/\ideal{y_3y_5,y_4y_5}
\]
has vanishing $0$-th and $1$-st local cohomology with respect to the maximal ideal $\frakm'=\ideal{y_1,\ldots,y_5}$ of $S'$. 
The decomposition $\ideal{y_3y_5,y_4y_5}=\ideal{y_5}\cap\ideal{y_3,y_4}$ in $\KK[H_{\sigma_1}][y_5]$ gives a short exact sequence
\[
0\to\frac{\KK[H_{\tau_1}][y_5]}{\ideal{y_3y_5,y_4y_5}}\to
\frac{\KK[H_{\tau_1}][y_5]}{\ideal{y_5}}\oplus\frac{\KK[H_{\tau_1}][y_5]}{\ideal{y_3,y_4}}
\to\frac{\KK[H_{\tau_1}][y_5]}{\ideal{y_3,y_4,y_5}}\to 0.
\]
Note that
$\KK[H_{\tau_1}][y_5]/\ideal{y_3,y_4}\cong\KK[y_1,y_2,y_5]/\ideal{y_2^3}$
so that the two rightmost displayed rings are complete intersections. 
As $\KK[H_{\tau_1}]$ is a domain, the long exact local cohomology sequence with support in $\frakm'$ reads
\begin{gather*}
0\to H^0_{\frakm'}(S')\to 0\oplus 0\to\\
0\to H^1_{\frakm'}(S')\to H^1_{\frakm'}\left(\frac{\KK[H_{\tau_1}][y_5]}{\ideal{y_5}}\right)\oplus0\to H^1_{\frakm'}\left(\frac{\KK[H_{\tau_1}][y_5]}{\ideal{y_3,y_4,y_5}}\right)\to \cdots	
\end{gather*}
Our claim then follows if we can show that
\[
H^1_{\frakm'}(\KK[H_{\tau_1}][y_5]/\ideal{y_5})\to
H^1_{\frakm'}(\KK[H_{\tau_1}][y_5]/\ideal{y_3,y_4,y_5})
\]
is injective. 
However,
$H^1_{\frakm'}(\KK[H_{\tau_1}][y_5]/\ideal{y_5})\cong\KK\cdot(y_2^2/y_1,y_3^2/y_4,0)$,
generated by the indicated $1$-cocycle in the \v Cech complex to $y_1,y_4,y_5$ on
$\KK[H_{\tau_1}][y_5]/\ideal{y_5}$. 
Modulo $y_3$ and $y_4$ this becomes the class of $y_2^2/y_1$ in the
\v Cech complex of $y_1$ on
$\KK[H_{\tau_1}]/\ideal{x_3,x_4}\cong \KK[y_1,y_2]/\ideal{y_2^3}$, and that is clearly non zero.

Of course, all claims made are corroborated by computer results.
\end{exa}

\section{An algorithm to compute the Newton filtration}\label{NF}

A rational polyhedron $0\in\Delta\subseteq C_H$ defines a Newton filtration $k\cdot\Delta\cap H$, $k\in\QQ$, on $H$ and hence on $\KK[H]$.
This filtration is separated if $\Delta$ is bounded; it is exhaustive if $k\cdot\Delta\cap H$ generates $H$ for some $k$.
We shall impose both conditions and assume that $\calA=\Delta\cap H$ is a finite set of generators of $H$. 
To recover the Newton polytope $\Delta=\Delta_\calA^\boldone$ from $\calA$ we shall use Gr\"obner methods.

The main goal of this section to develop an algorithm to compute the associated graded ring to such a Newton filtration on $\KK[H]$. 
This algorithm is based on Gr\"obner bases and represents our computational tool to approach Question (Q\ref{q1}).

We start with a formal definition of the Newton filtration relative to $\calA$. 
Recall that $\ell_\sigma$ is the linear functional associated to the facet $\sigma\in\Phi^{\boldone,r-1}_\calA$. 

\begin{dfn}\label{36}
For $h\in H$ its \emph{(Newton) $\calA$-degree} is 
\[
\deg_\calA(h)=\max\{\ell_\sigma(h)\mid \sigma\in\Phi^{\boldone,r-1}_\calA\}\in\QQ.
\]
\end{dfn}

Note that $\deg_\calA(h)$ is the rational number $k$ for which $h$ is precisely on the boundary of $k\cdot\Delta^\boldone_\calA$, and that $\deg_\calA$ is subadditive on $H$:
\[
\deg_\calA(h+h')\le\deg_\calA(h)+\deg_\calA(h')\text{ for all }h,h'\in H.
\]
The $\calA$-degree defines the increasing \emph{Newton filtration} $N_\bullet=N^\calA_\bullet$ on $H$ by
\[
N_kH=\{h\in H\mid\deg_\calA(h)\le k\}\text{ for all }k\in\QQ_+.
\]

\begin{dfn}
Let $H\subseteq \ZZ^d$ be any affine semigroup with increasing $\QQ$-indexed discrete exhaustive filtration $F_\bullet$ such that $F_i+F_j\subseteq F_{i+j}$.
We let $\deg_F(h)$ denote the smallest $i$ with $h\in F_i(h)$. 
The \emph{graded associated semigroup} $\gr^F(H)$ is the set
\[
\{[h]\mid h\in H\}\sqcup \{-\infty\}
\]
subject to the rules
\[
[h]+[h']=
\begin{cases}
-\infty & \text{if $h=-\infty$ or $h'=-\infty$} \\
-\infty & \text{if $\deg_F(h+h')<\deg_F(h)+\deg_F(h')$} \\
[h+h'] & \text{otherwise.}
\end{cases}
\]
In the dictionary between semigroup operations and semigroup ring
operations, sums equal to $-\infty$ encode zerodivisors in the
associated graded ring. 
\end{dfn}

If the filtration in question is the Newton filtration to $\calA$ we
denote the associated graded semigroup by $\gr^\calA(H)$ rather than
$\gr^{N^\calA}(H)$. In contrast to filtrations induced by an additive
weight, the Newton filtration may have an associated graded object
whose generators are not immediately obvious.

\begin{exa}\label{37}
Let $H=\NN\calA$ with $\calA=\{(1,0),(2,2),(0,1)\}\subseteq \ZZ^2$, so $H=\NN(1,0)+\NN(0,1)$. 
Although $(1,0)+(0,1)$ equals $(1,1)\in H$, the corresponding sum is $-\infty$ in the associated graded semigroup $\gr^\calA_\bullet(H)$. 
To see this, note that $\deg_\calA((1,1))=1/2<2=\deg_\calA((1,0))+\deg_\calA((0,1))$. 
This also implies that $\gr^\calA_{\geq 1}(H)$ cannot contain a generating set for $\gr^\calA(H)$ and in particular the cosets of $(1,0)$, $(2,2)$ and $(0,1)$ are not generators of the graded object.
\end{exa}

The following fundamental fact is stated, but not proved, in
\cite[(6)]{Kou76}.  It puts the above example in perspective and
determines the additive structure of $\gr^\calA(H)$. In consequence,
it describes the ring structure of $\gr^\calA(\KK[H])$, which as $\ZZ
H$-graded $\KK$-vector space can be identified with $\KK[H]$.  In
particular, one finds that $\gr^\calA(\KK[H])$ contains a copy of
$\KK[H_\tau]$ for all $\tau\in\Phi^{\boldone}_\calA$, cf.~Section~\ref{CM}). 
For convenience of the reader we provide a proof.

\begin{lem}\label{3}
The equality $\deg_\calA(h+h')=\deg_\calA(h)+\deg_\calA(h')$ holds
if and only if $h,h'$ share a $\boldone$-facet cone.
\end{lem}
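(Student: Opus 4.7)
The plan is to derive both directions from one observation: for $0 \ne h \in H$ and $\sigma \in \Phi^{\boldone,r-1}_\calA$, one has $\ell_\sigma(h) = \deg_\calA(h)$ if and only if $h \in C_\sigma$.

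To establish the observation, first note that since $0 \in \Delta^\boldone_\calA$ and $\max \ell_\sigma(\Delta^\boldone_\calA) = 1$, the functional $\ell_\sigma$ takes values in $[0,1]$ on $\Delta^\boldone_\calA$, hence is non-negative on the cone $C_H = \QQ_+ \Delta^\boldone_\calA$. For any $h \in C_H \setminus \{0\}$ the ray $\QQ_+ h$ exits $\Delta^\boldone_\calA$ at a unique point $h/t$ lying on the umbrella $\bigcup_{\sigma'} \sigma'$, where $t = \deg_\calA(h) > 0$. Membership $h/t \in \sigma$ --- equivalently $h \in C_\sigma$ --- is then detected by the condition $\ell_\sigma(h/t) = 1$, i.e., $\ell_\sigma(h) = \deg_\calA(h)$. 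The case $h = 0$ is trivial since $0 \in C_\sigma$ for every $\sigma$.

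With the observation in hand, the ``if'' direction is immediate: if $h, h' \in C_\sigma$ then $\ell_\sigma$ attains the maximum on both, while $h + h' \in C_\sigma$ since $C_\sigma$ is a cone, so by linearity of $\ell_\sigma$,
\[
\deg_\calA(h+h') = \ell_\sigma(h+h') = \ell_\sigma(h) + \ell_\sigma(h') = \deg_\calA(h) + \deg_\calA(h').
\]
For the converse, I would pick a facet $\sigma$ realizing $\ell_\sigma(h+h') = \deg_\calA(h+h')$. The assumed equality of $\calA$-degrees, combined with the pointwise bounds $\ell_\sigma(h) \le \deg_\calA(h)$ and $\ell_\sigma(h') \le \deg_\calA(h')$, forces both bounds to be tight, and the observation places $h$ and $h'$ jointly in $C_\sigma$.

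The main work lies in the observation itself: one must verify that $h/\deg_\calA(h)$ lands on an upper facet $\sigma \in \Phi^{\boldone,r-1}_\calA$ rather than on a facet of $\Delta^\boldone_\calA$ containing $0$. This reduces to the fact that $C_H$ is generated as a cone by $\Delta^\boldone_\calA \setminus \{0\}$, so any ray from $0$ into $C_H$ exits the polytope through the umbrella in a unique nonzero point.
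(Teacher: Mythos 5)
Your proof is correct and takes essentially the same route as the paper: both reduce the lemma to the observation that, for $h\neq 0$, $\ell_\sigma(h)=\deg_\calA(h)$ holds precisely when $h\in C_\sigma$ (realized as the exit point $h/\deg_\calA(h)$ lying on the facet $\sigma$ of the umbrella), and then exploit linearity of whichever $\ell_\sigma$ evaluates $\deg_\calA(h+h')$. One small correction: your parenthetical remark that $\ell_\sigma$ ``takes values in $[0,1]$ on $\Delta^\boldone_\calA$, hence is non-negative on $C_H$'' is false in general (e.g.\ for $\calA=\{(1,0),(0,1),(3,3)\}$ the functional supporting the facet through $(1,0)$ and $(3,3)$ is negative at $(0,1)$); fortunately the argument never uses non-negativity of $\ell_\sigma$ --- it only needs the normalization $\ell_\sigma\le 1$ on $\Delta^\boldone_\calA$, the boundedness of $\Delta^\boldone_\calA$ to ensure $\deg_\calA(h)>0$, and the fact that $C_H$ is the union of the facet cones $C_\sigma$, so the proof stands as written once that sentence is dropped.
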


\begin{proof}
If $h\in H_\sigma$ with $\sigma\in\Phi^{\boldone,r-1}_\calA$  then $\deg_\calA(h)=\ell_\sigma(h)$ and hence $\ell_{\sigma'}(h)\le\ell_\sigma(h)$ for all $\sigma'\in\Phi^{\boldone,r-1}_\calA$. 
If in addition $h\notin C_{\sigma'}$ then $h/\ell_\sigma(h)\in\Delta^\boldone_\calA\smallsetminus\sigma'$ and hence $\ell_{\sigma'}(h)<\ell_\sigma(h)$.
Now let $h\in H_\sigma$, $h'\in H_{\sigma'}$, and $h+h'\in C_{\sigma''}$ for some $\sigma,\sigma',\sigma''\in\Phi^{\boldone,r-1}_\calA$. 
Then 
\begin{align*}
\deg_\calA(h+h')=\ell_{\sigma''}(h+h')
&=\ell_{\sigma''}(h)+\ell_{\sigma''}(h')\\
&\le\ell_\sigma(h)+\ell_{\sigma'}(h')=\deg_\calA(h)+\deg_\calA(h').
\end{align*}
If $h,h'\in C_\sigma''$ then one can choose $\sigma=\sigma'=\sigma''$
so that the $\calA$-degree
of $h,h'$ and $h+h'$ is evaluated by the same linear functional
$\ell_{\sigma''}$. So equality in the above display follows in this case. 
If conversely either $h\notin C_{\sigma''}$ or $h'\notin C_{\sigma''}$
then the inequality is strict by the remarks before the display.
\end{proof}

Let $\calA'\supseteq \calA$ be a second set of generators for $H$. It
defines a free presentation of the monoid $H$,
\begin{equation}\label{1}
\varphi_{\calA'}\colon F_{\calA'}=\bigoplus_{a'\in\calA'}\NN\cdot e_{a'}\onto H,\quad e_{a'}\mapsto a'.
\end{equation}
where $\{e_{a'}\mid a'\in\calA\}$ is the distinguished monoid basis of
$F_{\calA'}=\NN^{\calA'}$.  The \emph{equalizer} of $\varphi_{\calA'}$,
\[
E_{\calA'}=\{(p,q)\in F_{\calA'}\times
F_{\calA'}\mid\varphi_{\calA'}(p)=\varphi_{\calA'}(q)\},
\]
defines an equivalence relation $\sim_{\varphi_{\calA'}}$ on $F_{\calA'}$ by
$[p\sim_{\varphi_{\calA'}} q]\Leftrightarrow[(p,q)\in E_{\calA'}]$. Of
course, the set of cosets with induced additive structure is precisely
$H$.

Let $L=(L_{a'})_{a'\in\calA'}$ be the vector of Newton degrees relative to
$\calA$:
\begin{equation}\label{35}
L_{a'}=\deg_\calA(a').
\end{equation}
The corresponding linear form $\deg_L=\sum_{a'\in\calA'}L_{a'}e_{a'}^*$ on $\QQ F_{\calA'}$ defines an additive degree and an increasing filtration $L_\bullet=L^\calA_\bullet$ on $F_{\calA'}$ by
\begin{equation}\label{4}
L_kF_{\calA'}=\{p\in F_{\calA'}\mid\deg_L(p)\le k\}\text{ for all }k\in\QQ_+.
\end{equation}

The equalizer $E_{\calA'}$ is equipped with an induced $L$-filtration
via the inclusion $E_{\calA'}\subseteq F_{\calA'}\times
F_{\calA'}$. This endows $H$ with a second rational increasing
filtration besides the $\calA$-Newton filtration: the filtration given
by the degree function
\begin{equation}\label{16}
\deg_L(\varphi_{\calA'}(p))=\min\{\deg_L(q)\mid p\sim_{\varphi_{\calA'}} q\}.
\end{equation}
In order to keep the notation straight we denote this second incarnation
of $H$ with the corresponding filtration by
$\Fquot{\calA'}$.
Both the $\calA$-Newton filtration and the
filtration $L^\calA_\bullet$ are equivalent to one with index set $\NN$; in
particular, their index sets are well-ordered.

By \eqref{35}, the additivity of $\deg_L$ on $F_{\calA'}$ and
the subadditivity of $\deg_\calA$ on $H$ imply
$\deg_L(p)\ge\deg_\calA(\varphi_{\calA'}(p))$ for all $p\in
F_{\calA'}$. Thus, $\varphi_{\calA'}(L_kF_{\calA'})\subseteq N_kH$ and
\begin{equation}\label{2}
L_k(\Fquot{\calA'})\subseteq N_kH
\end{equation}
for all $k\in\QQ$ which yields a morphism of filtered semigroups
\[
\Bar\varphi_{\calA',\calA}\colon(\Fquot{\calA'},L_\bullet)\to(H,N^\calA_\bullet).
\]
To \eqref{1} corresponds the $\KK$-algebra morphism 
\begin{equation}\label{28}
\varphi^\KK_{\calA'}\colon\KK[F_{\calA'}]\onto\KK[H],\quad y^p\mapsto t^{\varphi_{\calA'}(p)}.
\end{equation}
whose kernel is the toric ideal
\begin{equation}\label{19}
I_{\calA'}=\ideal{y^p-y^q\mid (p,q)\in E_{\calA'}}.
\end{equation}
The $L$-filtration on $F_{\calA'}$ induces an $L$-filtration on
$\KK[F_{\calA'}]$ that descends to the filtration on
$\KK[F_{\calA'}]/I_{\calA'}$ given by the $L$-filtration on
$\Fquot{\calA'}$ from \eqref{16}. 
By \eqref{2} we hence have a morphism of filtered $\KK$-algebras
\begin{equation}\label{20}
\Bar\varphi^\KK_{\calA',\calA}\colon(\KK[F_{\calA'}]/I_{\calA'},L_\bullet)\to(\KK[H],N^{\calA}_\bullet).
\end{equation}
Note that 
\begin{equation}\label{40}
\gr^L(\KK[F_{\calA'}]/I_{\calA'})=\KK[F_{\calA'}]/\gr^L(I_{\calA'}).
\end{equation}
While \eqref{28} is a surjection with kernel $I_{\calA'}$,
\eqref{20} is not necessarily an epimorphism of filtered
algebras. 
In particular, it may fail to induce a surjection of associated graded algebras; 
for the most elementary example see Example~\ref{37} with $\calA=\calA'$.

The following result describes precisely which sets $\calA'$ produce an isomorphism in \eqref{20}. 
In this proposition, and hereafter, we make use of the fact that $\Delta^{\boldone,k}_{\calA}=\Delta^{L,k}_{\calA'}$ (cf.~Notation \ref{49} and \eqref{35}) and hence $\Phi^{\boldone,k}_{\calA}=\Phi^{L,k}_{\calA'}$ for all $k$.

\begin{thm}\label{34}
For $\calA'\supseteq \calA$ the following conditions are equivalent:
\begin{enumerate}
\item\label{34a} For each $\sigma\in\Phi^{\boldone,r-1}_\calA$, $\calA'_\sigma$ contains a generating set for the monoid $H_\sigma$.
\item\label{34b} For each $\tau\in\Phi^{\boldone}_\calA$, $\calA'_\tau$ contains a generating set for the monoid $H_\tau$.
\item\label{34c} The surjection \eqref{28} maps $L_k(F_{\calA'})$ onto $N_k(H)$ for all $k\in\QQ$.
\item\label{34d} The morphism \eqref{20} is an isomorphism of filtered semigroups.
\end{enumerate}
\end{thm}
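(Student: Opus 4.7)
The plan is to establish the cycle \eqref{34a}$\Rightarrow$\eqref{34c}$\Rightarrow$\eqref{34d}$\Rightarrow$\eqref{34a} and to treat \eqref{34a}$\Leftrightarrow$\eqref{34b} separately. The central geometric fact I would use repeatedly is that for $a'\in H$ one has $\deg_\calA(a')=\ell_\sigma(a')$ if and only if $a'\in C_\sigma$, because $a'/\deg_\calA(a')$ lies in $\Delta^\boldone_\calA$ precisely when $\ell_{\sigma'}(a'/\deg_\calA(a'))\le 1$ for all facets $\sigma'$, with equality on $\sigma$ iff it lies in $\sigma$.

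For \eqref{34a}$\Leftrightarrow$\eqref{34b}, the direction \eqref{34b}$\Rightarrow$\eqref{34a} is immediate. Conversely, every face $\tau$ is contained in some facet $\sigma$, so $H_\tau\subseteq H_\sigma$; given $h\in H_\tau$, condition \eqref{34a} writes $h=\sum c_{a'}a'$ with $a'\in\calA'_\sigma$, and a supporting linear functional $\ell$ that is nonnegative on $C_\sigma$ and vanishes on $C_\tau$ satisfies $0=\ell(h)=\sum c_{a'}\ell(a')$ with nonnegative summands, forcing each $a'$ with $c_{a'}>0$ to lie in $C_\tau$.

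For \eqref{34a}$\Rightarrow$\eqref{34c}, given $h\in N_kH$ I would pick a facet $\sigma$ with $h\in C_\sigma$, so $\deg_\calA(h)=\ell_\sigma(h)\le k$. By \eqref{34a}, write $h=\sum c_{a'}a'$ with $a'\in\calA'_\sigma\subseteq C_\sigma$; the boxed identity yields $\deg_\calA(a')=\ell_\sigma(a')$ for each such $a'$, so by linearity of $\ell_\sigma$,
\[
\deg_L(p)=\sum c_{a'}L_{a'}=\sum c_{a'}\ell_\sigma(a')=\ell_\sigma(h)=\deg_\calA(h)\le k.
\]
For \eqref{34c}$\Rightarrow$\eqref{34d}, \eqref{16} identifies $L_k(\Fquot{\calA'})$ with $\varphi_{\calA'}(L_kF_{\calA'})$; condition \eqref{34c} equates this with $N_kH$, and since $\Bar\varphi_{\calA',\calA}$ is already the identity on underlying semigroups, this makes it a filtered isomorphism.

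Finally, for \eqref{34d}$\Rightarrow$\eqref{34a}, fix $\sigma\in\Phi^{\boldone,r-1}_\calA$ and $h\in H_\sigma$, so $\deg_\calA(h)=\ell_\sigma(h)=:k$. By \eqref{34d}, write $h=\varphi_{\calA'}(p)$ with $p=\sum c_{a'}e_{a'}$ and $\deg_L(p)\le k$. Using the always-valid inequality $\deg_\calA(a')\ge\ell_\sigma(a')$,
\[
k=\ell_\sigma(h)=\sum c_{a'}\ell_\sigma(a')\le\sum c_{a'}\deg_\calA(a')=\deg_L(p)\le k,
\]
so equality is forced term by term: $\ell_\sigma(a')=\deg_\calA(a')$ for every $a'$ with $c_{a'}>0$. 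By the geometric fact above, each such $a'$ lies in $C_\sigma$, i.e.\ in $\calA'_\sigma$, proving \eqref{34a}. The main obstacle is the orchestration of the degree inequalities in \eqref{34d}$\Rightarrow$\eqref{34a}; the argument is really a numerical incarnation of Lemma~\ref{3}, which asserts precisely that additivity of $\deg_\calA$ detects membership in a common facet cone.
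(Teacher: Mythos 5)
Your proof is correct and follows essentially the same route as the paper's: both hinge on Lemma~\ref{3} (additivity of $\deg_\calA$ iff common $\boldone$-facet cone), which is what your ``central geometric fact'' encodes. The only cosmetic differences are that you close the loop via $\eqref{34d}\Rightarrow\eqref{34a}$ by a direct term-by-term equality argument where the paper proves the contrapositive $\neg\eqref{34a}\Rightarrow\neg\eqref{34c}$, and you spell out the supporting-functional argument for $\eqref{34a}\Rightarrow\eqref{34b}$, which the paper dispatches in one sentence.
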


\begin{proof}
It is clear that the last two conditions are equivalent. As faces are
the intersection of the facets they are contained in, the first two
conditions are equivalent as well.

Pick $h\in H$ with $\deg_\calA(h)=l$, and let
$C_\sigma$ be any cone in $\Phi^{\boldone,r-1}_\calA$ that contains $h$. 
If condition \eqref{34a} holds then there is a relation 
$h=\sum_{a'\in \calA'_\sigma}k_{a'}a'$
and by Lemma~\ref{3} we have
\[
\deg_\calA(h)=\sum_{a'\in\calA'_\sigma}k_{a'}\deg_\calA(a')=\sum_{a'\in
\calA'_\sigma}k_{a'}\deg_L(e_{a'})=\deg_L(\sum_{a'\in\calA'_\sigma}k_{a'}e_{a'}).
\]
Hence, $h\in\varphi_{\calA'}(L_l F_{\calA'})$ and \eqref{34c} follows.

Conversely, assume condition \eqref{34a} fails and let,
for suitable $\sigma\in\Phi^{\boldone,r-1}_\calA$, $h\in H_\sigma$ be an element not contained in $\NN\calA'_\sigma$. Then any
relation $h=\sum_{a'\in \calA'}k_{a'}a'$ involves at least one
$a'\in\calA'$ outside $C_\sigma$.  As $h\in C_\sigma$ but $a'\not\in
C_\sigma$, Lemma~\ref{3} asserts that $\deg_\calA(h)$ is strictly less
than $\deg_L(\sum_{a'\in\calA'}k_{a'}e_{a'})$. This being so for all
presentations for $h$, $h$ cannot be in $\varphi_{\calA'}(L_l
F_{\calA'})$ and \eqref{34c} cannot hold.
\end{proof} 

Algorithm~\ref{7} below computes the Newton filtration relative to $\calA$ by enlarging $\calA'\supseteq\calA$ until the conditions in Theorem \ref{34} are fulfilled.
The approach is to compare not the filtrations $L_\bullet$ and $N^\calA_\bullet$ or the corresponding graded objects but the defining relations of the latter and to systematically add generators to $\calA'$ to reach equality of these relations.
While Lemma \ref{3} determines the relations of $H$, the relations $\gr^L(E_{\calA'})$ have been studied in general in \cite{SW05} from where we shall extract Corollary \ref{21} for our purposes.
In Lemma \ref{42} it will turn out that the above equality of relations enforces condition \ref{34}.\eqref{34a} which will justify our procedure.

The following is a reformulation of \cite[Thm.~2.15]{SW05} which is valid for general $L$. Recall that Lemma~\ref{9} contains the special case $L=\boldzero$.

\begin{prp}\label{17}
For any $\calA$ and any $L=(L_a)_{a\in\calA}$, any $\ZZ H$-graded prime ideal in $\gr^L(\KK[F_\calA]/I_\calA)$ is of the form 
\[
I^L_\tau=I_{\{a\in\calA\mid a^L\in\tau\}}+\ideal{y_a\mid a\in\calA, a^L\not\in\tau}
\]
for some $\tau\in\Phi^L_\calA$. 
In particular, the radical of $\gr^L(I_\calA)$ is $\bigcap_{\sigma\in\Phi^{L,r-1}_\calA}I^L_\sigma$. \qed
\end{prp}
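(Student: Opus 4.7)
The plan is to reduce to a classification of $\ZZ H$-graded primes of $\KK[F_\calA]$ containing $\gr^L(I_\calA)$, then match these with faces of the $(\calA,L)$-umbrella via a combinatorial argument. Using \eqref{40}, the quotient ring is $\KK[F_\calA]/\gr^L(I_\calA)$, so $\ZZ H$-graded primes correspond to $\ZZ H$-graded primes of $\KK[F_\calA]$ above $\gr^L(I_\calA)$. The ideal $\gr^L(I_\calA)$ is generated by $L$-initial forms of the toric binomials $y^p-y^q$ with $\varphi_\calA(p)=\varphi_\calA(q)$: if $\deg_L(p)\ne\deg_L(q)$ this is a pure monomial, and if $\deg_L(p)=\deg_L(q)$ this is the binomial itself, in which case the points $\{a^L\mid p_a>0\}\cup\{a^L\mid q_a>0\}$ all lie on a common face of $\Delta^L_\calA$.

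First I would verify that for each $\tau\in\Phi^L_\calA$ the ideal $I^L_\tau$ is a $\ZZ H$-graded prime containing $\gr^L(I_\calA)$. The quotient $\KK[F_\calA]/I^L_\tau$, obtained by killing $y_a$ for $a^L\notin\tau$ and imposing the toric relations on the remaining variables, is isomorphic to the semigroup ring $\KK[H_\tau]$, which is a domain because $H_\tau$ is a positive affine semigroup. To check $\gr^L(I_\calA)\subseteq I^L_\tau$, note that any initial form of $y^p-y^q$ that is a monomial must involve some $a$ with $a^L\notin\tau$ (its support witnesses that $(p,q)$ is not face-concentrated at $\tau$), while any initial form that is a binomial with both supports in $\calA_\tau$ lies in $I_{\calA_\tau}$.

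The main obstacle is the converse: given a $\ZZ H$-graded prime $\frakp\supseteq\gr^L(I_\calA)$, one must produce a face $\tau$ with $\frakp=I^L_\tau$. Let $S=\{a\in\calA\mid y_a\notin\frakp\}$. The claim to establish is that $\{a^L\mid a\in S\}$ lies on a single face $\tau$ of the $(\calA,L)$-polyhedron. If not, there is an affine dependence among the $a^L$ for $a\in S$ that pushes the common image $\varphi_\calA(p)$ strictly below the face they would jointly span, yielding a toric binomial $y^p-y^q$ with $p$ supported on $S$ and with $\deg_L(p)<\deg_L(q)$; its initial form is $y^p$, which lies in $\gr^L(I_\calA)\subseteq\frakp$, contradicting primeness since $y^p$ is a product of elements outside $\frakp$. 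Once $\{a^L\mid a\in S\}$ is known to sit on a face $\tau$, matching monomial and binomial parts shows $\frakp=I^L_\tau$.

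Finally, for the radical statement: since $\KK[F_\calA]/I^L_\tau\cong\KK[H_\tau]$ has Krull dimension $\dim C_\tau$, the containment $I^L_\tau\supseteq I^L_{\tau'}$ forces $\tau\subseteq\tau'$ (up to equality), so the minimal primes among the $I^L_\tau$ are exactly those indexed by facets $\sigma\in\Phi^{L,r-1}_\calA$. The radical $\sqrt{\gr^L(I_\calA)}$ is the intersection of all minimal primes, yielding $\bigcap_{\sigma\in\Phi^{L,r-1}_\calA}I^L_\sigma$. The combinatorial core of this argument is already contained in \cite[Thm.~2.15]{SW05}, which handles the convex-geometric heart of step three; the present formulation repackages that result in the $\gr^L$-language used here.
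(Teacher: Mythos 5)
The paper offers no proof of this proposition; it is stated as a direct reformulation of \cite[Thm.~2.15]{SW05}, and your final sentence ultimately punts the ``convex-geometric heart'' to that same reference, so at the level of sourcing the key fact you match the paper. The issue is that the details you do supply contain a false auxiliary claim and an unaddressed case.

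Your assertion that $\deg_L(p)=\deg_L(q)$ forces the points $\{a^L\mid p_a>0\}\cup\{a^L\mid q_a>0\}$ onto a common face of $\Delta^L_\calA$ is wrong: with $\calA=\{(1,0),(0,1),(2,1),(1,2)\}\subseteq\ZZ^2$ and $L=\boldone$, the relation $a_1+a_4=a_2+a_3$ is $L$-balanced, yet the four points are not collinear and so lie on no proper face of the pentagon $\Delta^\boldone_\calA$. Consequently your check that $\gr^L(I_\calA)\subseteq I^L_\tau$ is incomplete: you handle monomial initial forms, and binomial initial forms with both supports inside $\calA_\tau$, but not the mixed case in which exactly one of the two supports lies in $\calA_\tau$. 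That case must be ruled out rather than ignored, since a binomial with exactly one monomial in $\ideal{y_a\mid a^L\notin\tau}$ cannot lie in the prime $I^L_\tau$. The correct argument uses the supporting functional of a facet $\sigma\supseteq\tau$: if $p$ is supported in $\calA_\tau$, then $\deg_L(p)$ equals that functional evaluated at $\varphi_\calA(p)=\varphi_\calA(q)$, which is at most $\deg_L(q)$ with equality only if $q$ is supported in $\calA_\sigma$; intersecting over all facets containing $\tau$ shows $q$ is supported in $\calA_\tau$, so the mixed case is vacuous. (In the example above both $y_1y_4$ and $y_2y_3$ lie in the monomial ideal for any $\tau$, so the binomial is in every $I^L_\tau$ anyway --- but for a reason your text does not give.) There is also a direction slip in the converse step: for the increasing $L$-filtration, the class of $y^p-y^q$ in $\gr^L$ is the higher-degree monomial, so to conclude $y^p\in\gr^L(I_\calA)$ you need $\deg_L(p)>\deg_L(q)$, not $\deg_L(p)<\deg_L(q)$.
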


The following corollary adapts Proposition~\ref{17} to our special choice of $L$ and yields the core of our procedure.
Its first part relates relations in $\gr^L(E_{\calA'})$ to those of $H$ defined by $\boldone$-facets in Lemma \ref{3}.
Its second part serves to determine all $\boldone$-facets from a Gr\"obner basis in Steps \ref{7a} and \ref{7b} of our algorithm.

\begin{dfn}
We call an \emph{$L$-leading term of $E_{\calA'}$} an element $p\in F_{\calA'}$ such that $\deg_L(p)>\deg_L(q)$ for some $(p,q)\in E_{\calA'}$. 
\end{dfn}

\begin{cor}\label{21}
For $a'_1,a'_2\in\calA'$, the following conditions are equivalent:
\begin{enumerate}
\item\label{21a} For some $k\ge1$, $k(e_{a'_1}+e_{a'_2})$ is an
$L$-leading term of $E_{\calA'}$.
\item\label{21z} The elements $a'_1,a'_2$ do not share a
  $\boldone$-facet cone.
\end{enumerate}
If $a_1,a_2$ are actually in $\calA$, the following conditions are
equivalent:
\begin{enumerate}
\setcounter{enumi}{2}
\item\label{21d} One has $\deg_\calA(a_1)=1=\deg_\calA(a_2)$, and 
$a_1,a_2$ do not share a $\boldone$-facet cone.
\item\label{21e} For some $k\ge1$, $k(e_{a_1}+e_{a_2})$ is a
$\boldone$-leading term of $E_{\calA}$, but neither
$ke_{a_1}$ nor $ke_{a_1}$ is a
$\boldone$-leading term of $E_{\calA}$ for any $k\ge1$.
\end{enumerate}
\end{cor}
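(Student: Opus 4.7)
The plan is to derive Corollary~\ref{21} from Proposition~\ref{17} through the dictionary between $L$-leading terms of $E_{\calA'}$ and monomials occurring in $\gr^L(I_{\calA'})$. The first step is to record this dictionary: $p\in F_{\calA'}$ is an $L$-leading term iff the monomial $y^p$ represents a non-zero element of $\gr^L(I_{\calA'})$. One direction is immediate since a witnessing pair $(p,q)\in E_{\calA'}$ produces the binomial $y^p-y^q\in I_{\calA'}$ with $L$-initial form $y^p$; conversely, if $f=y^p+g\in I_{\calA'}$ with $\deg_L(g)<\deg_L(p)$, then applying $\varphi^\KK_{\calA'}$ forces $g$ to contain a monomial $-y^q$ with $\varphi_{\calA'}(q)=\varphi_{\calA'}(p)$, so $(p,q)\in E_{\calA'}$ and $p$ is $L$-leading.

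Next I would invoke Proposition~\ref{17}: the radical of $\gr^L(I_{\calA'})$ equals $\bigcap_{\sigma\in\Phi^{L,r-1}_{\calA'}}I^L_\sigma$, and each $I^L_\sigma$ is prime, decomposing as a toric ideal in the variables $\{y_a\mid a^L\in\sigma\}$ plus the monomial ideal $\ideal{y_a\mid a^L\notin\sigma}$. Since the toric summand is itself prime and contains no monomial, a monomial $y^p$ lies in $I^L_\sigma$ exactly when some $a$ in the support of $p$ satisfies $a^L\notin\sigma$. For Part~1, the existence of $k\geq 1$ with $k(e_{a'_1}+e_{a'_2})$ an $L$-leading term is equivalent to $y_{a'_1}y_{a'_2}\in\sqrt{\gr^L(I_{\calA'})}$, and by the above to every facet $\sigma\in\Phi^{L,r-1}_{\calA'}$ excluding at least one of $(a'_1)^L,(a'_2)^L$. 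Using the identification $\Phi^L_{\calA'}=\Phi^\boldone_\calA$ together with the equivalence $(a')^L\in\sigma\Leftrightarrow a'\in C_\sigma$ (both say that $\sigma$ realizes the maximum $\ell_\sigma(a')=\deg_\calA(a')=L_{a'}$), this is precisely the statement that $a'_1,a'_2$ do not share a $\boldone$-facet cone.

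For Part~2, I would apply the same machinery with $\calA'=\calA$ and $L=\boldone$; the only subtlety is that $a^\boldone=(a:1)$ lies on $\sigma\in\Phi^{\boldone,r-1}_\calA$ iff $\ell_\sigma(a)=1$, i.e.\ iff $\deg_\calA(a)=1$ and $a\in C_\sigma$. Consequently $ke_{a_i}$ is a $\boldone$-leading term for some $k\geq 1$ iff $y_{a_i}\in\sqrt{\gr^\boldone(I_\calA)}$ iff $a_i$ lies on no facet iff $\deg_\calA(a_i)<1$. The negative clause of \eqref{21e} therefore forces $\deg_\calA(a_1)=\deg_\calA(a_2)=1$, and under this constraint ``lying on a common facet'' coincides with ``sharing a facet cone''. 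Applying the monomial criterion to $y_{a_1}y_{a_2}$ then turns the positive clause of \eqref{21e} into the statement that $a_1,a_2$ do not share a $\boldone$-facet cone, yielding \eqref{21d}$\Leftrightarrow$\eqref{21e}.

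The main technical obstacle will be the careful unpacking of the equivalence $a^L\in\sigma\Leftrightarrow a\in C_\sigma$ (and its slightly subtler height-$1$ counterpart in Part~2), together with the verification that membership of a monomial in the prime $I^L_\sigma$ is detected purely by its variable support. Both facts are routine, but must be spelled out precisely for the chain of equivalences to be airtight.
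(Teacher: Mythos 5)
Your argument follows essentially the same route as the paper's proof: you translate condition (1) into the nilpotency of $y_{a'_1}y_{a'_2}$ in $\gr^L(\KK[F_{\calA'}]/I_{\calA'})$ (equivalently, radical membership of the monomial), invoke Proposition~\ref{17} to describe that radical as an intersection of the primes $I^L_\sigma$, observe that membership of a monomial in $I^L_\sigma$ is detected by variable support, and finally use the identification $(a')^L\in\sigma\Leftrightarrow a'\in C_\sigma$, exactly as the paper does; the handling of conditions \eqref{21d}$\Leftrightarrow$\eqref{21e} via the specialization $\calA'=\calA$, $L=\boldone$, and the observation that the negative clause forces $\deg_\calA(a_i)=1$, also mirrors the published argument. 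Your proof is correct.
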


\begin{proof}
By \eqref{19} and \eqref{40}, condition \ref{21}.\eqref{21a} holds if
and only if $t^{a'_1}t^{a'_2}$ is nilpotent in
$\gr^L(\KK[F_{\calA'}]/I_{\calA'})$.  By Proposition~\ref{17} this
happens exactly when $t^{a'_1}t^{a'_2}$ is contained in
$\bigcap_{\tau\in\Phi_{\calA'}^L}I^L_\tau$.  This is in turn
equivalent to $(a'_1)^L,(a'_2)^L\in\tau$ for no
$\tau\in\Phi^L_{\calA'}=\Phi^\boldone_\calA$.  By definition, 
$\deg_\calA((a')^L)=1$ for all $a'\in\calA'$ and hence $(a')^L\in\tau$
is equivalent to $a'\in C_\tau$ for all
$\tau\in\Phi_{\calA'}^L=\Phi_\calA^\boldone$.  This proves equivalence
of \ref{21}.\eqref{21a} and \ref{21}.\eqref{21z}.

By the equivalence of conditions \ref{21}.\eqref{21a} and
\ref{21}.\eqref{21z} above, the first condition in
\ref{21}.\eqref{21e} means that $a_1,a_2\in\sigma$ for no
$\sigma\in\Phi^{\boldone,r-1}_\calA$.  On the other hand, the second
condition means that neither $t^{a_1}$ nor $t^{a_2}$ is nilpotent in
$\gr^\boldone(\KK[H])$, which in turn, by Proposition~\ref{17}, is
equivalent to $\deg_\calA(a_1)=1=\deg_\calA(a_2)$. But under this
latter condition, $a_1,a_2\in\sigma$ for no $\sigma\in
\Phi^{\boldone,r-1}_\calA$ is equivalent to $a_1,a_2\in C_\sigma$ for
no $\sigma\in\Phi^{\boldone,r-1}_\calA$. The equivalence of
\ref{21}.\eqref{21d} and \ref{21}.\eqref{21e} follows.
\end{proof}

We now state the algorithm and explain how to carry out its steps in
practice, and then prove termination and correctness.

\begin{alg}[Newton filtration on an affine semigroup]\label{7}\
\begin{enumerate}[1.]
\item[\sc{Input}:] $\calA\subseteq\ZZ^d$ such that $H=\NN\calA$ is positive.
\item[\sc{Output}:] $H\supseteq\calA'\supseteq\calA$ and
  $L=(L_a)_{a\in\calA'}$ such that \eqref{2} is an equality for all $k$.
\item\label{7a} Compute the set $\calB$ of $e_{a_1}+e_{a_2}$ where $a_1,a_2\in\calA$ such that $k(e_{a_1}+e_{a_2})$ is a $\boldone$-leading term of $E_{\calA}$ for some $k\ge1$ but neither $ke_{a_1}$ nor $ke_{a_2}$ is a $\boldone$-leading term of $E_{\calA}$ for any $k\ge1$.
\item\label{7b} Determine all $\sigma\in\Phi_\calA^{\boldone,r-1}$ by the rule: $a_1,a_2\not\in\sigma$ iff $e_{a_1}+e_{a_2}\in\calB$.
\item\label{7c} For each $\sigma\in\Phi_\calA^{\boldone,r-1}$, compute the linear form $\ell_\sigma$ that equals $1$ on $\sigma$.
\item\label{7d} For every pair $a_1,a_2\in\calA$ with
  $\deg_\calA(a_1)\deg_\calA(a_2)<1$ update $\calB$ to include
  $e_{a_1}+e_{a_2}$ if $a_1,a_2$ do not share a $\boldone$-facet cone.
\item\label{7e} Initialize $\calA'=\calA$ and
  $L=(L_{a'})_{a'\in\calA'}$ by $L_{a'}=\deg_\calA(a')$.
\item\label{7f} Remove from $\calB$ all $L$-leading terms of $E_{\calA'}$.
\item\label{7g} For each $e_{b_1}+e_{b_2}\in\calB$ do the following:
\begin{enumerate}[\ref{7g}.1]
\item\label{7g1} Update $\calA'$ to include $b_1+b_2$.
\item\label{7g2} Update $L$ with $L_{b_1+b_2}=\deg_\calA(b_1+b_2)$.
\item\label{7g3} Update $\calB$ to include $e_{a'_1}+e_{a'_2}$ whenever
  $a'_1, a'_2\in\calA'$ do not share a $\boldone$-facet cone.
\end{enumerate}
\item\label{7h} If $\calB\ne\emptyset$ continue with Step \ref{7f}.
\item\label{7i} Return $\calA'$ and $L$.
\end{enumerate}
\end{alg}

\begin{rmk}\label{39}\
\begin{asparaenum}

\item\label{39a} By the correspondence \eqref{19} between $E_{\calA'}$ and $I_{\calA'}$, the set of all $L$-leading terms of $E_{\calA'}$ is computable by Gr\"obner basis methods in rings of polynomials.
Such computations may be carried out with standard programs such as \cite{M2,GPS05}. 

\item\label{39b} By the second part of Corollary \ref{21}, Step \ref{7a} adds expressions $e_{a_1}+e_{a_2}$ to the queue $\calB$ where $a_1,a_2\in\calA$ are in different facets of the $\boldone$-umbrella of $\calA$.
The conditions in Step~\ref{7a} translate to $y_{a_1}y_{a_2}\in\sqrt{\gr^\boldone(I_\calA)}\not\ni y_{a_1},y_{a_2}$ which may be tested via {\tt decompose} in \cite{M2} or with {\tt radical} in \cite{GPS05}. 
Alternatively, in large examples, one can use the estimate in Lemma~\ref{38} below to test this membership.

\item\label{39c} The condition $a\in\sigma$ is, via Definition~\ref{36}, equivalent to $\deg_\calA(a)=\ell_\sigma(a)$. 

\item\label{39d} Step~\ref{7d} adds expressions $e_{a_1}+e_{a_2}$ to the queue $\calB$ where $a_1,a_2\in\calA$ are in different $\boldone$-facet cones and at least one of them is  in the interior of $\Delta^\boldone_\calA$.
Starting with the first passage of Step \ref{7f}, the queue $\calB$ indicates pairs $\{a'_1,a'_2\}$ for which $[a'_1]+[a'_2]=-\infty$ in $\gr^\calA(H)$ but not in $\gr^L(\Fquot{\calA'})$.

\item\label{39e} By \eqref{40}, the $L$-leading terms of an $L$-Gr\"obner basis of $I_{\calA'}$ give a presentation of $\gr^\calA(\KK[H])=\gr^L(\KK[F_{\calA'}]/I_{\calA'})$, cf.~\eqref{46} in Example \ref{15}.

\end{asparaenum}
\end{rmk}

Our first task is to assure convergence of the algorithm. 
To this end fix a facet $\sigma\in\Phi^{\boldone,r-1}_\calA$. 
Any increasing sequence of subsets $\calA_\sigma=\calA^0_\sigma\subseteq\calA^1_\sigma\subseteq\calA^2_\sigma\subseteq\cdots\subseteq H_\sigma$ leads to a stabilizing sequence of semigroups 
\begin{equation}\label{44}
\NN\calA_\sigma=\NN\calA^0_\sigma\subseteq\NN\calA^1_\sigma\subseteq\NN\calA^2_\sigma\subseteq\cdots\subseteq H_\sigma. 
\end{equation}
Namely, one obtains an increasing sequence of $\KK[\NN\calA_\sigma]$-modules
\begin{equation}\label{43}
\KK[\NN\calA_\sigma]=\KK[\NN\calA^0_\sigma]\subseteq\KK[\NN\calA^1_\sigma]\subseteq\KK[\NN\calA^2_\sigma]\subseteq\cdots\subseteq\KK[H_\sigma]. 
\end{equation}
Essentially by Gordan's lemma, $H_\sigma$ is finitely generated.
Since $\calA$ contains elements on the extremal rays of $C_\sigma$, $\KK[H_\sigma]$ is a
finite integral extension of $\KK[\calA_\sigma]$ and hence a Noetherian $\KK[\calA_\sigma]$-module. 
Thus, \eqref{43} stabilizes and hence so does \eqref{44}. 
By finiteness of $\Phi^{\boldone,r-1}_\calA$, it follows that eventually any new element $b_1+b_2$ of $\calA'$ suggested by Step~\ref{7g} is already in $\NN\calA'_\sigma$ for some $\sigma\in\Phi^{\boldone,r-1}_\calA$.

Suppose the algorithm has reached this stage and let $e_{b_1}+e_{b_2}\in\calB$.
By Remark~\ref{39}.\eqref{39b} and Step~\ref{7d}, $b_1,b_2$ do not share a $\boldone$-facet cone.
By Lemma~\ref{3}, with $a''=b_1+b_2$,
\[
\deg_L(e_{b_1}+e_{b_2})=\deg_L(e_{b_1})+\deg_L(e_{b_2})=\deg_\calA(b_1)+\deg_\calA(b_2)>\deg_\calA(a'').
\]
By the stability hypothesis on \eqref{44}, $a''\in\NN\calA'_\sigma$ for some $\sigma\in\Phi^{\boldone,r-1}_\calA$ and one can write $a''=\sum_{a'\in\calA'_\sigma}k_{a'}a'$ where
\[
\deg_\calA(e_{a''})=\sum_{a'\in\calA'_\sigma}k_{a'}\deg_\calA(e_{a'})=\sum_{a'\in\calA'_\sigma}k_{a'}\deg_L(e_{a'})=\deg_L(e_{a''})
\]
by Lemma~\ref{3} again. 
Thus, $(e_{b_1}+e_{b_2},\sum_{a'\in\calA'_\sigma}k_{a'}e_{a'})\in E_{\calA'}$ with $L$-leading term $e_{b_1}+e_{b_2}$ in contradiction to Step~\ref{7f}.
We conclude that $\calB=0$ and the algorithm terminates.

\medskip

We now prove that the algorithm computes what we want. So we assume
that $\calB$ has been reduced to the empty set and we let from now on
$\calA'$ denote the stable value of the generating set for $H$ that
forms the output of Algorithm~\ref{7}.

\begin{lem}\label{42}
If $\calB=\emptyset$ then for $a'_1,a'_2\in\calA'$ the following
are equivalent:
\begin{enumerate}
\item\label{42b} $e_{a'_1}+e_{a'_2}$ is an $L$-leading term of $E_{\calA'}$.
\item\label{42a} $a'_1,a'_2$ do not share a $\boldone$-facet cone.
\end{enumerate}
In particular, $\NN\calA'_\sigma=H_\sigma$ for all $\sigma\in\Phi^{\boldone,r-1}_\calA$.
\end{lem}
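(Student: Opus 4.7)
For the equivalence \eqref{42b}$\Leftrightarrow$\eqref{42a}, the direction $\Rightarrow$ is the $k=1$ case of the first equivalence in Corollary~\ref{21} (\eqref{21a}$\Leftrightarrow$\eqref{21z}). For the converse, given non-sharing $a'_1,a'_2\in\calA'$, my plan is to trace $e_{a'_1}+e_{a'_2}$ through the algorithm: it must have been enqueued in $\calB$ at some iteration---either initially by Step~\ref{7a} or Step~\ref{7d} (if both $a'_i\in\calA$, depending on whether both have $\boldone$-degree $1$ or at least one has degree $<1$, invoking \eqref{21d}$\Leftrightarrow$\eqref{21e} in Corollary~\ref{21}), or later by Step~\ref{7g3} (once both were in $\calA'$, the latecomer added by some Step~\ref{7g1}). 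Since $\calB=\emptyset$ at termination, Step~\ref{7f} removed the pair, which happens precisely when $e_{a'_1}+e_{a'_2}$ becomes an $L$-leading term of the then-current $E_{\calA'}$; this property persists as $\calA'$ grows because the natural inclusion $F_{\calA'_{\mathrm{old}}}\hookrightarrow F_{\calA'_{\mathrm{new}}}$ preserves the witnessing relation.

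For the ``in particular'' containment $H_\sigma\subseteq\NN\calA'_\sigma$ (the reverse being immediate), I would take $h\in H_\sigma$ and pick a representation $q=\sum k_{a'}e_{a'}$ of $h$ in $F_{\calA'}$ with $\deg_L(q)$ minimal. By the just-proved equivalence, any pair in the support of $q$ must share a $\boldone$-facet cone: a non-sharing pair would give an $L$-leading term whose lower-$\deg_L$ witness, substituted into $q$, would contradict minimality.

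The main obstacle is then to upgrade pairwise sharing of the support to common sharing in some facet $\tau\in\Phi^{\boldone,r-1}_\calA$. My plan here is to apply Proposition~\ref{17} to $\gr^L(\KK[F_{\calA'}]/I_{\calA'})$: failure of common sharing would place the monomial $\prod y_{a'}^{k_{a'}}$ in $\bigcap_\sigma I^L_\sigma=\sqrt{\gr^L(I_{\calA'})}$, so a suitable power of $q$ is an $L$-leading term, which---via the iterated form of Lemma~\ref{3} characterizing $\deg_\calA$-additivity by common sharing---should contradict the minimality of $\deg_L(q)$. Once common sharing in some $\tau$ is established, the support of $q$ lies in $C_\tau$, hence $h=\sum k_{a'}a'\in C_\sigma\cap C_\tau$. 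Applying the linear form $\ell_\tau-\ell_\sigma$, which is nonnegative on $C_\tau$ (since $\ell_\tau=\deg_\calA$ on $C_\tau$ while $\ell_\sigma\leq\deg_\calA$ always) and vanishes exactly on $C_\sigma\cap C_\tau$, the relation $0=(\ell_\tau-\ell_\sigma)(h)=\sum k_{a'}(\ell_\tau-\ell_\sigma)(a')$ with nonnegative summands and $k_{a'}>0$ forces $(\ell_\tau-\ell_\sigma)(a')=0$ and hence $a'\in C_\sigma$ for every $a'$ in the support of $q$, giving $h\in\NN\calA'_\sigma$.
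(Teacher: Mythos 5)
Your handling of the equivalence \eqref{42b}$\Leftrightarrow$\eqref{42a} matches the paper's in substance: the paper reduces to showing \ref{21}.\eqref{21a}$\Rightarrow$\ref{42}.\eqref{42b} and then traces $e_{a'_1}+e_{a'_2}$ through the algorithm exactly as you do; your remark that leading-term-hood persists under enlargement of $\calA'$ is a detail the paper leaves implicit. For the ``in particular'' claim, both you and the paper take a $\deg_L$-minimal representation $q=\sum k_{a'}e_{a'}$ of $a\in H_\sigma$, observe that any pair in the support must share a $\boldone$-facet cone (else \eqref{42a}$\Rightarrow$\eqref{42b} yields a cheaper representation), and then must pass from pairwise sharing to containment in one common facet cone $C_\tau$. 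The paper asserts this (``there must be a $\sigma\ldots$'') without comment. You correctly flag it as nontrivial --- pairwise sharing in a fan does \emph{not} in general force common sharing: three points on the three edges of $\Delta^\boldone_\calA$ meeting at a vertex are pairwise contained in facets yet lie in no single facet. Your closing $\ell_\tau-\ell_\sigma$ computation is correct and helpfully fills in the paper's terse ``and so $a\in\NN\calA'_\sigma$.''

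However, your proposed repair of the pairwise-to-common step via Proposition~\ref{17} does not close the gap. If the support of $q$ lies in no single $C_\tau$ then indeed $y^q\in\bigcap_\sigma I^L_\sigma=\sqrt{\gr^L(I_{\calA'})}$, so $y^{Nq}\in\gr^L(I_{\calA'})$ for some $N\ge1$, i.e.\ $Nq$ is an $L$-leading term of $E_{\calA'}$. That yields a representation of $Na$ of $\deg_L$ strictly less than $N\deg_L(q)$, but it does \emph{not} produce a representation of $a$ cheaper than $q$; the minimality of $\deg_L(q)$ among representations of $a$ is untouched unless one can force $N=1$, which nilpotency alone does not do. (The iterated form of Lemma~\ref{3} that you invoke --- $\deg_\calA(\sum h_i)=\sum\deg_\calA(h_i)$ iff all $h_i$ lie in a common $C_\tau$ --- is the right characterization; it shows the needed conclusion is exactly $\deg_L(q)=\deg_\calA(a)$, after which $\ell_\sigma$ alone already gives $a'\in C_\sigma$ for all $a'$ in the support, making $\ell_\tau-\ell_\sigma$ unnecessary. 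But neither the nilpotency argument nor the paper's unelaborated assertion establishes $\deg_L(q)=\deg_\calA(a)$ from pairwise sharing.) So both proofs leave a genuine step to be justified here, and your attempted fix, while aimed at the right spot, is not yet a proof.
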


\begin{proof}
In order to show the announced equivalence it suffices by
Corollary~\ref{21} to show that condition \ref{21}.\eqref{21a} implies
condition \ref{42}.\eqref{42b}.  So assume that $k(e_{a'_1}+e_{a'_2})$
is an $L$-leading term of $E_{\calA'}$, $k$ being minimal in that
respect. By Corollary~\ref{21}, $a'_1$ and $a'_2$ do not share a
$\boldone$-facet cone and hence $e_{a'_1}+e_{a'_2}$ was added to
$\calB$ at some point in the algorithm. 
This is obvious from Step~\ref{7g} if not both $a'_1,a'_2$ are in $\calA$ and from \eqref{39b} and \eqref{39d} in Remark~\ref{39} otherwise.
As $\calB=\emptyset$, $e_{a'_1}+e_{a'_2}$ was eliminated from the queue in Step~\ref{7f} and hence must be an $L$-leading term of $E_{\calA'}$ itself.

To prove the second claim, let $a\in H_\sigma$.  Since $\calA'$
generates $H$, one can write $a=\sum_{a'\in\calA'}k_{a'}a'$.  Of all
such expressions pick one for which
$\deg_L(\sum_{a'\in\calA'}k_{a'}e_{a'})$ is minimized.  Suppose
$a'_1,a'_2$ make a contribution to the sum and \ref{42}.\eqref{42a}
and hence \ref{42}.\eqref{42b} holds.  Then $a'_1+a'_2=\sum_{a''\in
\calA'}k_{a''}a''$ with
$\deg_L(e_{a'_1}+e_{a'_2})>\deg_L(\sum_{a''\in\calA'}k_{a''}e_{a''})$.
So $\sum_{a'\in\calA'}k_{a'}e_{a'}-e_{a'_1}-e_{a'_2}+\sum_{a''\in
\calA'}k_{a''}e_{a'}$ maps to $a$ under $\varphi_{\calA'}$ in
display \eqref{1} but has smaller $L$-degree than
$\sum_{a'\in\calA'}k_{a'}e_{a'}$.  By contradiction, there must be a
$\sigma\in\Phi^{\boldone,r-1}_\calA$ for which $C_\sigma$ contains all
terms in $\sum_{a'\in\calA'}k_{a'}a'$ and so $a\in\NN\calA'_\sigma$ as
required.
\end{proof}

Corollary~\ref{21}, Theorem~\ref{34}, Lemma~\ref{42}, and the
arguments after Remark~\ref{39} combine to the main theorem of this section.

\begin{thm}
Algorithm \ref{7} terminates and is correct.\qed
\end{thm}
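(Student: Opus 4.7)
The plan is to assemble the pieces already built, since the hard work has been done in the preceding lemmas. My proof proposal consists of two independent parts: termination (convergence of the main loop) and correctness (the output satisfies the claim about \eqref{2}).

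For termination, I would point to the argument sketched in the paragraph following Remark~\ref{39}. The key input is that for each facet $\sigma \in \Phi^{\boldone,r-1}_\calA$, the tower \eqref{44} stabilizes: $\KK[H_\sigma]$ is a Noetherian module over $\KK[\calA_\sigma]$ by Gordan's lemma plus the fact that $\calA$ already contains generators on the extremal rays of $C_\sigma$, so the intermediate rings in \eqref{43} stabilize, hence so do their semigroup supports. Once every facet has stabilized, any fresh candidate $b_1+b_2$ produced by Step~\ref{7g1} lies in $\NN\calA'_\sigma$ for some facet $\sigma$. Combined with Lemma~\ref{3}, this means the $L$-degree of $e_{b_1}+e_{b_2}$ strictly exceeds that of a representation inside $\NN\calA'_\sigma$, so $e_{b_1}+e_{b_2}$ is already an $L$-leading term of $E_{\calA'}$ and is killed in Step~\ref{7f}. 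Thus $\calB$ is eventually emptied.

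For correctness, once $\calB = \emptyset$ I invoke Lemma~\ref{42}, whose conclusion is exactly that $\NN\calA'_\sigma = H_\sigma$ for every $\sigma \in \Phi^{\boldone,r-1}_\calA$. This is condition \ref{34}.\eqref{34a} of Theorem~\ref{34}, and the equivalence there immediately upgrades it to condition \ref{34}.\eqref{34c}, which asserts that $\varphi_{\calA'}$ sends $L_k F_{\calA'}$ onto $N_k H$ for every $k \in \QQ$. That is precisely the equality in \eqref{2} promised by the \textsc{Output} line of Algorithm~\ref{7}.

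The only conceptual obstacle worth spelling out is the bookkeeping inside Lemma~\ref{42}: one has to verify that \emph{every} pair $\{a'_1,a'_2\} \subseteq \calA'$ that fails to share a $\boldone$-facet cone was in fact inserted into $\calB$ at some point during the run. If both elements lie in the original set $\calA$ this is handled by Step~\ref{7a} (for pairs with $\deg_\calA(a_1) = \deg_\calA(a_2) = 1$, via the second half of Corollary~\ref{21}) together with Step~\ref{7d} (for the remaining pairs involving an interior element of $\Delta^\boldone_\calA$); if at least one arose as a new generator, Step~\ref{7g3} covers the case. Since the algorithm only terminates once $\calB$ is empty, every such pair must have been processed through Step~\ref{7f}, forcing $e_{a'_1}+e_{a'_2}$ to be an $L$-leading term of $E_{\calA'}$. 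Once this closure property is granted, the two theorems combine to the final statement with no further computation.
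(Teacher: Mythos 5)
Your proposal is correct and takes essentially the same approach as the paper: the paper's own justification is the single sentence ``Corollary~\ref{21}, Theorem~\ref{34}, Lemma~\ref{42}, and the arguments after Remark~\ref{39} combine to the main theorem of this section,'' and you assemble exactly those pieces --- the stabilization/Noetherian argument for termination, and Lemma~\ref{42} combined with the equivalence \ref{34}.\eqref{34a}$\Leftrightarrow$\ref{34}.\eqref{34c} of Theorem~\ref{34} for correctness. The ``bookkeeping'' you flag as a conceptual obstacle is in fact already carried out inside the paper's proof of Lemma~\ref{42} (via Step~\ref{7g} and Remark~\ref{39}.\eqref{39b},\eqref{39d}), so your restatement of it is accurate but not an addition.
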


The following bound for the torsion order $k$ in Corollary \ref{21} can be useful in practice, cf.\ Remark~\ref{39}.\eqref{39b}.

\begin{dfn}\label{45}
We denote by $M(\calA)$ be the largest absolute value of a maximal minor of a matrix $A$ whose columns are a $\ZZ H$ basis representation of the elements of $\calA$.
\end{dfn}

\begin{lem}\label{38}
For any $p\in F_\calA$, if $kp$ is an $L$-leading term of $E_{\calA}$ for some $k\ge1$ then also for $k=M(\calA)$.
\end{lem}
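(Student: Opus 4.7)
The plan is to translate the hypothesis into a linear programming statement and then exploit Cramer's rule on the vertices of a suitable polyhedron.

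First, I would reformulate what $kp$ being an $L$-leading term means. By definition it asks for $q\in F_\calA=\NN^\calA$ with $\varphi_\calA(q)=k\varphi_\calA(p)$ and $\deg_L(q)<k\deg_L(p)$. Setting $v=kp-q\in\ZZ^\calA$, this becomes the existence of $v\in\ker A\cap\ZZ^\calA$ with $v\leq kp$ componentwise and $\deg_L(v)>0$. Dividing by $k$, the hypothesis is thus equivalent to the statement that the rational polyhedron
\[
P=\{v\in\QQ^\calA\mid Av=0,\ v\leq p\}
\]
contains some point at which $\deg_L$ is strictly positive. Note that $P$ is pointed (the bounds $v\leq p$ preclude any line in $P$) and contains $0$, hence has vertices.

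I would then solve the LP $\sup_{v\in P}\deg_L(v)$, which exceeds $0$ by hypothesis. If the supremum is finite, it is attained at a vertex $v^*$ of $P$ with $\deg_L(v^*)>0$. Such a vertex is cut out by $n-r$ tight bounds $v_a=p_a$ for $a$ in some index set $S\subseteq\calA$ with $|S|=n-r$; the remaining coordinates are the unique solution of $A_{S^c}v_{S^c}=-A_Sp_S$. Uniqueness forces $A_{S^c}$ to be an invertible $r\times r$ submatrix of $A$, and Cramer's rule shows that $k_0:=|\det A_{S^c}|$ clears the denominators of $v^*$. By Definition~\ref{45}, $k_0\leq M(\calA)$. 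Hence $k_0v^*\in\ker A\cap\ZZ^\calA$ satisfies $k_0v^*\leq k_0p$ and $\deg_L(k_0v^*)>0$, so $k_0p$ is an $L$-leading term of $E_\calA$ with $k_0\leq M(\calA)$.

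If instead the supremum is $+\infty$, then the recession cone $\mathrm{rec}(P)=\{w\in\QQ^\calA\mid Aw=0,\ w\leq0\}$, which is pointed since it lies in the non-positive orthant, contains an element with $\deg_L>0$ and hence an extreme ray with the same property. A primitive integer generator $w^*$ of that ray lies in $\ker A\cap\ZZ^\calA$, satisfies $w^*\leq0\leq p$, and has $\deg_L(w^*)>0$; this already witnesses that $p$ itself is an $L$-leading term, so in particular so is $M(\calA)p$.

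To conclude, I would invoke the elementary monotonicity: if $v\in\ker A\cap\ZZ^\calA$ witnesses that $kp$ is an $L$-leading term, then $v\leq kp\leq k'p$ for every integer $k'\geq k$ since $p\geq0$, so $v$ also witnesses that $k'p$ is an $L$-leading term. Combining this with the finite case (where some $k_0\leq M(\calA)$ works) yields that $M(\calA)p$ is an $L$-leading term. The main technical obstacle is matching the Cramer denominator bound with $M(\calA)$ exactly; the key observation is that vertices of $P$ are indexed precisely by invertible $r\times r$ submatrices of $A$ (together with a choice of $n-r$ tight coordinates), so the denominators of vertices are exactly maximal minors of $A$.
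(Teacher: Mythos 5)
Your proof is correct, but it takes a genuinely different route from the paper's. The paper passes through Proposition~\ref{17}: it characterizes the hypothesis as nilpotence of $y^p$ in $\gr^L(\KK[F_\calA]/I_\calA)$, restricts attention to the two shapes $p=e_a$ and $p=e_{a_1}+e_{a_2}$ actually needed by Algorithm~\ref{7}, and then uses the umbrella geometry directly: it picks a facet cone $C_\sigma$ containing $a$ (resp.\ $a_1+a_2$), selects $r$ linearly independent elements $\calB\subseteq\calA$ with $\calB^L\subseteq\sigma$ and $a\in\QQ_+\calB$, and applies the adjugate of the corresponding $r\times r$ submatrix $B$ to produce the integer relation of order $\det(B)\le M(\calA)$; nonnegativity of the Cramer coefficients comes from $a\in\QQ_+\calB$ with $\calB$ independent. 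You instead unpack the definition of an $L$-leading term directly into the feasibility of a rational polyhedron $P=\{v:Av=0,\;v\le p\}$ with a linear objective $\deg_L$, argue that $P$ is pointed, and read off the denominator bound from the vertices of $P$ (finite case) or from an extreme ray of the recession cone (unbounded case), finishing by the monotonicity $v\le kp\le k'p$. Both arguments ultimately invoke Cramer's rule on an $r\times r$ submatrix of $A$, so they bound by the same quantity $M(\calA)$; what your LP formulation buys is that it treats arbitrary $p\in F_\calA$ uniformly and needs no appeal to Proposition~\ref{17} or to the combinatorics of $\Phi^L_\calA$, whereas the paper's proof is tailored to the two monomial shapes that arise in the algorithm and leans on the structure of the $(\calA,L)$-umbrella to locate the appropriate independent subset.
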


\begin{proof}
Proposition~\ref{17} spells out when $t^p$ is nilpotent in
$\gr^L(\KK[H])$.
We may reduce to the cases $p=e_a$ where $a^L\not\in\sigma$ for any
$\sigma\in\Phi_\calA^{L,r-1}$, and $p=e_{a_1}+e_{a_2}$ where
$a_1^L,a_2^L\in\sigma$ for no $\sigma\in\Phi_\calA^{L,r-1}$.

In the first case, $a^L$ lies in the interior of $\Delta^L_\calA$ and
we choose $\sigma\in\Phi_\calA^{L,r-1}$ with $a\in C_\sigma$. 
Then there is a subset $\calB\subseteq\calA$ of $r$ linearly independent elements such
that $\calB^L=\{b^L\mid b\in\calB\}\subseteq\sigma$ and
$a\in\QQ_+\calB$. Let $B$ be a matrix whose columns are the elements
of $\calB$, denote the adjoint matrix of $B$ by $\check B$, and let
$k=\det(B)\ne 0$.  Then $ka=B\check Ba$ implies that
$ka=\sum_{b\in\calB}k_bb$ for some $k_b\in\ZZ$. Since 
$ka\in\QQ_+\calB$ and since 
$\calB$ is linearly independent it follows that each $k_b\in\NN$. Rewriting this as
$kL_aa^L=\sum_{b\in\calB}k_bL_bb^L$, the above conditions on $a^L$
imply that
\[
\deg_L(ke_a)=kL_a>\sum_{b\in\calB}k_bL_b=\deg_L(\sum_{b\in\calB}k_be_b).
\]
Thus, $\left(ke_a,\sum_{b\in\calB}k_be_b\right)\in E_\calA$ and $ke_a$
is an $L$-leading term of $E_\calA$.

The proof of the second case is analogous: the convex combination
$\frac{L_1a_1^L+L_2a_2^L}{L_1+L_2}=\frac{a_1+a_2}{L_1+L_2}$ of $a_1^L$
and $a_2^L$ lies in the interior of $\Delta^L_\calA$.  As above,
choose a linearly independent subset $\calB\subseteq\calA$ such that
$a_1+a_2\in\QQ_+\calB$.  This time, an equality
\[
(L_1+L_2)k\frac{L_1a_1^L+L_2a_2^L}{L_1+L_2}=k(a_1+a_2)=\sum_{b\in\calB}k_bb=\sum_{b\in\calB}k_bL_bb^L,
\]
with $k_b\in\NN$, 
implies that $\deg_L(k(e_{a_1}+e_{a_2}))>\deg_L(\sum_{b\in\calB}k_be_b)$
and hence that $k(e_{a_1}+e_{a_2})$ is an $L$-leading term of $E_\calA$.

By Definition~\ref{45}, $M(\calA)\ge\det(B)=k$ which completes the proof.
\end{proof}

\begin{exa}\label{15}
Consider the affine semigroup $H$ generated by 
\[
\calA=\{a_1,\dots,a_6\}=
\left\{
\begin{pmatrix}2\\0\\0\end{pmatrix},
\begin{pmatrix}3\\0\\0\end{pmatrix},
\begin{pmatrix}0\\1\\0\end{pmatrix},
\begin{pmatrix}1\\1\\0\end{pmatrix},
\begin{pmatrix}2\\0\\1\end{pmatrix},
\begin{pmatrix}0\\2\\1\end{pmatrix}
\right\}.
\]
\begin{figure}[htbp]
\setlength{\unitlength}{0.8mm}
\begin{picture}(100,60)(0,0){
\put(50,0){
\put(0,0){\vector(0,1){50}}
\put(0,0){\vector(1,0){50}}
\put(0,0){\vector(-1,1){50}}
\multiput(0,0)(0,20){3}{\path(1,0)(-1,0)}
\multiput(0,0)(20,0){3}{\path(0,1)(0,-1)}
\multiput(-14,14)(-14,14){3}{\path(-1,-1)(1,1)}
\put(0,-2){\makebox(0,0)[t]{$0$}}
\put(2,20){\makebox(0,0)[l]{$1$}}
\put(2,40){\makebox(0,0)[l]{$2$}}
\put(1,49){\makebox(0,0)[l]{$t_3$}}
\put(20,-2){\makebox(0,0)[t]{$1$}}
\put(40,-2){\makebox(0,0)[t]{$2$}}
\put(49,-1){\makebox(0,0)[t]{$t_2$}}
\put(-15,13){\makebox(0,0)[tr]{$1$}}
\put(-29,27){\makebox(0,0)[tr]{$2$}}
\put(-43,41){\makebox(0,0)[tr]{$3$}}
\put(-49,48){\makebox(0,0)[tr]{$t_1$}}
\put(0,0){\texture{
bbbbbbbb bbbbbbbb bbbbbbbb bbbbbbbb bbbbbbbb bbbbbbbb bbbbbbbb bbbbbbbb 
bbbbbbbb bbbbbbbb bbbbbbbb bbbbbbbb bbbbbbbb bbbbbbbb bbbbbbbb bbbbbbbb 
bbbbbbbb bbbbbbbb bbbbbbbb bbbbbbbb bbbbbbbb bbbbbbbb bbbbbbbb bbbbbbbb 
bbbbbbbb bbbbbbbb bbbbbbbb bbbbbbbb bbbbbbbb bbbbbbbb bbbbbbbb bbbbbbbb}
\shade\path(20,0)(40,20)(6,14)(20,0)\thicklines\path(20,0)(40,20)(6,14)(20,0)}
\put(0,0){\texture{
88888888 88888888 88888888 88888888 88888888 88888888 88888888 88888888 
88888888 88888888 88888888 88888888 88888888 88888888 88888888 88888888 
88888888 88888888 88888888 88888888 88888888 88888888 88888888 88888888 
88888888 88888888 88888888 88888888 88888888 88888888 88888888 88888888}
\shade\path(40,20)(-28,48)(-42,42)(40,20)\thicklines\path(40,20)(-28,48)(-42,42)(40,20)}
\put(0,0){\texture{
99999999 99999999 99999999 99999999 99999999 99999999 99999999 99999999 
99999999 99999999 99999999 99999999 99999999 99999999 99999999 99999999 
99999999 99999999 99999999 99999999 99999999 99999999 99999999 99999999 
99999999 99999999 99999999 99999999 99999999 99999999 99999999 99999999}
\shade\path(6,14)(40,20)(-42,42)(6,14)
\thicklines\path(6,14)(40,20)(-42,42)(6,14)}
\put(-28,28){\circle*{1}\put(2,0){\makebox(0,0)[l]{$a_1$}}}
\put(-42,42){\circle*{1}\put(0,2){\makebox(0,0)[b]{$a_2$}}}
\put(20,0){\circle*{1}\put(2,0){\makebox(0,0)[bl]{$a_3$}}}
\put(6,14){\circle*{1}\put(0,-1){\makebox(0,0)[tr]{$a_4$}}}
\put(-28,48){\circle*{1}\put(0,1){\makebox(0,0)[b
]{$a_5$}}}
\put(40,20){\circle*{1}\put(1,0){\makebox(0,0)[l]{$a_6$}}}
\put(-8,28){\circle*{1}\put(1,0){\makebox(0,0)[l]{$a'_7$}}}
\put(-8,48){\circle*{1}\put(1,0){\makebox(0,0)[l]{$a'_8$}}}
\put(-22,62){\circle*{1}\put(1,0){\makebox(0,0)[l]{$a'_9$}}}
\dottedline{1}(0,0)(-8,28)
\dottedline{1}(0,0)(-3.23,19.38)
\dottedline{1}(-6,36)(-8,48)
\dottedline{1}(0,0)(-7.83,22.07)
\dottedline{1}(-13.2,37.2)(-22,62)
\put(-6,21){\circle*{1}}
\put(-6,36){\circle*{1}}
\put(-13.2,37.2){\circle*{1}}
\put(6,22){\makebox(0,0)[b]{$\sigma_1$}}
\put(-26,41){\makebox(0,0)[b]{$\sigma_2$}}
\put(20,8){\makebox(0,0)[b]{$\sigma_3$}}
}
}
\end{picture}
\caption{$\calA'$ as generated by Algorithm~\ref{7} applied to Example~\ref{15} and its central projection onto the $(\calA,\boldone)$-umbrella}\label{47}
\end{figure}

We follow the steps in Algorithm~\ref{7}.
Using {\sc Singular} \cite{GPS05}, we compute a $\boldone$-Gr\"obner basis
\begin{gather*}
y_{2}y_{3}-y_{1}y_{4}, \quad
y_{4}^{2}y_{5}-y_{1}^{2}y_{6}, \quad
y_{3}y_{4}y_{5}-y_{2}y_{6}, \quad
y_{3}^{2}y_{5}-y_{1}y_{6}, \\
y_{1}y_{3}^{2}-y_{4}^{2}, \quad
y_{1}^{2}y_{3}-y_{2}y_{4}, \quad
y_{1}^{3}-y_{2}^{2}
\end{gather*}
of $I_\calA$ and thus a Gr\"obner basis 
\begin{gather}\label{50}
y_{2}y_{3}-y_{1}y_{4}, \quad
y_{4}^{2}y_{5}-y_{1}^{2}y_{6}, \quad
y_{3}y_{4}y_{5}, \quad
y_{3}^{2}y_{5}, \quad
y_{1}y_{3}^{2}, \quad
y_{1}^{2}y_{3}, \quad
y_{1}^{3} 
\end{gather}
of $\gr^\boldone(I_\calA)$.  
Obviously, $e_{a_3}+e_{a_5}\in\calB$ after Step~\ref{7a} while $e_{a_1}+e_{a_3}$ is not added to $\calB$ since $y^3_3$ is a $\boldone$-leading term. 
To complete Step~\ref{7a}, compute $M(\calA)=6$ and reduce all $6$-th powers of $y_iy_j$ and $y_k$ with respect to the Gr\"obner basis \eqref{50} to find
\[
\calB=\{e_{a_2}+e_{a_3},e_{a_3}+e_{a_5},e_{a_4}+e_{a_5}\}.
\]
By the rule in Step~\ref{7b}, 
\[
\Phi_\calA^{\boldone,r-1}=\{\sigma_1,\sigma_2,\sigma_3\}=\left\{\{a_2,a_4,a_6\},\ \{a_2,a_5,a_6\},\ \{a_3,a_4,a_6\}\right\}
\]
and the corresponding linear forms from Step~\ref{7c} are
\[
\ell_1=\left(\frac{1}{3},\frac{2}{3},-\frac{1}{3}\right),\quad
\ell_2=\left(\frac{1}{3},\frac{1}{3},\frac{1}{3}\right),\quad
\ell_3=(0,1,-1).
\]
Note that $2a_2=3a_1$, and hence $a_1\in C_{\sigma_1}\cap C_{\sigma_2}$.
According to Step~\ref{7d}, we update 
\[
\calB=\{e_{a_1}+e_{a_3},e_{a_2}+e_{a_3},e_{a_3}+e_{a_5},e_{a_4}+e_{a_5}\}.
\]
In Step~\ref{7e}, we compute $\calA$-degrees of $\calA'=\calA$ to obtain the weight vector 
\[
L=\left(\frac{2}{3},1,1,1,1,1\right).
\]
Using {\sc Singular} \cite{GPS05}, we compute an $L$-Gr\"obner basis
\begin{gather*}
y_{2}y_{3}-y_{1}y_{4},\quad 
y_{1}^{3}-y_{2}^{2},\quad 
y_{1}^{2}y_{3}-y_{2}y_{4},\quad 
y_{1}y_{3}^{2}-y_{4}^{2},\\
y_{4}^{2}y_{5}-y_{1}^{2}y_{6},\quad 
y_{3}y_{4}y_{5}-y_{2}y_{6},\quad  
y_{3}^{2}y_{5}-y_{1}y_{6}
\end{gather*}
of $I_{\calA'}$ and thus a Gr\"obner basis
\[
y_{2}y_{3},\quad 
y_{1}^{3}-y_{2}^{2},\quad 
y_{1}^{2}y_{3},\quad 
y_{1}y_{3}^{2},\quad 
y_{4}^{2}y_{5},\quad 
y_{3}y_{4}y_{5},\quad 
y_{3}^{2}y_{5}
\]
of $\gr^L(I_{\calA'})$ that reduces $\calB$ in Step~\ref{7f} to 
\[
\calB=\{e_{a_1}+e_{a_3},e_{a_3}+e_{a_5},e_{a_4}+e_{a_5}\}.
\]
Following Step~\ref{7g}, we set
\[
a'_7=a_1+a_3=\begin{pmatrix}2\\1\\0\end{pmatrix},\quad
a'_8=a_3+a_5=\begin{pmatrix}2\\1\\1\end{pmatrix},\quad
a'_9=a_4+a_5=\begin{pmatrix}3\\1\\1\end{pmatrix}
\]
and update
\begin{align*}
\calA'&=\calA\cup
\left\{
a'_7,
a'_8,
a'_9
\right\},\\
L&=\left(\frac{2}{3},1,1,1,1,1,1,\frac{4}{3},\frac{4}{3},\frac{5}{3}\right),\\
\calB&=\{e_{a_1}+e_{a_3},e_{a_3}+e_{a_5},e_{a_3}+e_{a_7},e_{a_3}+e_{a_8},e_{a_3}+e_{a_9},\\
&\phantom{=\{}e_{a_4}+e_{a_5},e_{a_4}+e_{a_8},e_{a_4}+e_{a_9},e_{a_5}+e_{a_7},e_{a_7}+e_{e_8},e_{a_7}+e_{a_9}\}. 
\end{align*}
The next Gr\"obner basis
\begin{gather}\label{46}
y_{1}y_{3},\ 
y_{4}y_{5},\ 
y_{3}y_{5},\ 
y_{2}y_{4}-y_{1}y_{7},\ 
y_{2}y_{3},\ 
y_{1}^{3}-y_{2}^{2},\ 
y_{4}y_{8},\ 
y_{3}y_{8},\ 
y_{2}y_{8}-y_{1}y_{9},\\
y_{5}y_{7},\ 
y_{3}y_{7},\ 
y_{1}^{2}y_{4}-y_{2}y_{7},\ 
y_{4}y_{9},\ 
y_{3}y_{9},\ 
y_{7}y_{8},\ 
y_{1}^{2}y_{8}-y_{2}y_{9},\ 
y_{1}y_{5}y_{6}-y_{8}^{2},\nonumber\\
y_{1}y_{4}^{2}-y_{7}^{2},\ 
y_{7}y_{9},\ 
y_{2}y_{5}y_{6}-y_{8}y_{9},\ 
y_{1}y_{8}^{2}-y_{9}^{2},\ 
y_{8}^{4}-y_{5}y_{6}y_{9}^{2}\nonumber
\end{gather}
of $\gr^L(I_{\calA'})$ in Step~\ref{7f} reduces $\calB$ to zero and the procedure terminates.
The relations \eqref{46} define a presentation of $\gr^\calA(\KK[H])$ as quotient of $\KK[y_1,\dots,y_9]$.
\end{exa}

\section{Cohen--Macaulayness of the Newton graded toric ring}\label{CM}

In this section we investigate Cohen--Macaulayness of the Newton
graded ring $\gr^\calA(\KK[H])$.  For $1\le k\le r$, let
\[
A_k=A_k(H)=\bigoplus_{\tau\in\dotPhi^{\boldone,k-1}_\calA}\KK[H_\tau];
\]
in particular,
\[
A_r=\bigoplus_{\sigma\in\Phi^{\boldone,r-1}_\calA}\KK[H_\sigma].
\]
By Lemmas \ref{3} and \ref{9}, for $\tau\in\Phi^\boldone_\calA$, $\KK[H\smallsetminus C_\tau]$ is an ideal in $\gr^\calA(\KK[H])$ and therefore
\[
\gr^\calA(\KK[H])\onto\gr^\calA(\KK[H])/\KK[H\smallsetminus C_\tau]=\KK[H_\tau]\subseteq\gr^\calA(\KK[H]) 
\]
are maps of $\KK$-algebras.
In particular, for $\sigma\in\Phi^{\boldone,r-1}_\calA$,
\begin{equation}\label{13}
\frakm\cdot\KK[H_\sigma]=\frakm_\sigma
\end{equation}
and hence, for $1\le k\le r$,
\[
H^i_\frakm(A_k)=\bigoplus_{\tau\in\dotPhi^{\boldone,k-1}_\calA}H^i_{\frakm_\tau}\KK[H_\tau]
\]\
for all $i$.
By \cite[Prop.~2.6]{Kou76}, there is an exact sequence
\begin{equation}\label{12}
0\to\gr^\calA(\KK[H])\to A_r\to A_{r-1}\to\cdots\to A_1\to 0
\end{equation}
in the category of $\gr^\calA(\KK[H])$-modules. This sequence expresses
$\gr^\calA(\KK[H])$ as the product of the rings $\KK[H_\sigma]$ over
facets $\sigma\in\Phi^{\boldone,r-1}_\calA$ modulo the identification
of border rings.

Applying the local cohomology functor $R\Gamma_\frakm$ to \eqref{12}, one obtains a convergent
spectral sequence
\begin{equation}\label{11}
E_1^{p,q}=H^q_\frakm(A_{r-p})\Longrightarrow H^{r-p+q}_\frakm(\gr^\calA(\KK[H])).
\end{equation}
It follows that if each $\KK[H_\tau]$, $\tau\in\dotPhi^\boldone_\calA$, is Cohen--Macaulay then the spectral sequence collapses on the $E_1$-page and $\gr^\calA(\KK[H])$ must also be Cohen--Macaulay as was already shown in \cite{Kou76} with a slightly different argument. 

The local cohomology approach facilitates the search for examples where $\KK[H]$ is Cohen--Macaulay but where
$\gr^\calA(\KK[H])$ is not Cohen--Macaulay and where hence not all of the $\KK[H_\tau]$ can be Cohen--Macaulay themselves.
First, observe that the multi-degrees of $A_k$ and hence those of $R\Gamma_\frakm(A_k)$ are contained in
\[
Q_k=\bigcup_{\tau\in\dotPhi^{\boldone,k-1}_\calA}\QQ\tau.
\]
Next, suppose that $\KK[H]$ is of dimension $r=2$.  Then the spectral
sequence degenerates into a long exact sequence of $\ZZ H$-graded
modules which, since each $\KK[H_\tau]$ is a domain, starts with
\[
0\to H^1_\frakm(\gr^\calA(\KK[H]))\to H^1_\frakm(A_2)\to H^1_\frakm(A_1)\to\cdots
\]
Thus, if one can find a Cohen--Macaulay ring $\KK[H]$ where the
multi-degrees of $H^1_\frakm(A_2)$ are not contained in $Q_1$ then
$H^1_\frakm(\gr^\calA(\KK[H]))$ cannot be zero and therefore
$\gr^\calA(\KK[H])$ not Cohen--Macaulay.  

In order to arrive at such example one can start with a two-dimensional cone $C_H$ of a semigroup $H$ where $\KK[H]$ is not Cohen--Macaulay.
Then one strategically adds new elements outside $C_H$ to $H$ to make $H=\Bar H$, cf.~Corollary~\ref{14}. 
We illustrate the idea by an example.

\begin{exa}\label{51}
Let $H$ be the affine semigroup generated by 
\[
\calA=\{a_1,\dots,a_4\}=
\left\{
\begin{pmatrix}1\\0\end{pmatrix},
\begin{pmatrix}1\\1\end{pmatrix},
\begin{pmatrix}1\\3\end{pmatrix},
\begin{pmatrix}1\\4\end{pmatrix}
\right\}
\]
so that $\KK[H]$ is the coordinate ring of the pinched rational quartic. 
As is well-known, $H^1_\frakm(\KK[H])$ is the one-dimensional vector
space spanned by the class of the $1$-cocycle $(y_2^2/y_1,y_3^2/y_4)$
in the \v Cech complex to $y_1$ and $y_4$, cf.~Example~\ref{23}. 
The multi-degree of this generator equals $(1,2)$ and
$\{(1,2)\}=\Bar H\smallsetminus H$.
\begin{figure}[ht]
\caption{The $(\calA,\boldone)$-polyhedron for Example~\ref{51}}
\begin{center}
\setlength{\unitlength}{0.65mm}
\begin{picture}(170,55)(0,-5)
\put(0,0){\texture{
88888888 88888888 88888888 88888888 88888888 88888888 88888888 88888888 
88888888 88888888 88888888 88888888 88888888 88888888 88888888 88888888 
88888888 88888888 88888888 88888888 88888888 88888888 88888888 88888888 
88888888 88888888 88888888 88888888 88888888 88888888 88888888 88888888}
\shade\path(0,41)(0,0)(164,41)
\texture{
88888888 88888888 88888888 88888888 88888888 88888888 88888888 88888888 
88888888 88888888 88888888 88888888 88888888 88888888 88888888 88888888 
88888888 88888888 88888888 88888888 88888888 88888888 88888888 88888888 
88888888 88888888 88888888 88888888 88888888 88888888 88888888 88888888}
\shade\path(164,41)(0,0)(164,0)
\texture{
99999999 99999999 99999999 99999999 99999999 99999999 99999999 99999999 
99999999 99999999 99999999 99999999 99999999 99999999 99999999 99999999 
99999999 99999999 99999999 99999999 99999999 99999999 99999999 99999999 
99999999 99999999 99999999 99999999 99999999 99999999 99999999 99999999}
\shade\path(0,0)(0,20)(80,20)(60,0)(0,0)
\path(0,0)(80,20)
\dottedline{1}(160,40)(170,42.5)
\thicklines\path(0,20)(80,20)(60,0)}
\put(0,0){\vector(0,1){50}}
\put(0,0){\vector(1,0){170}}
\multiput(0,0)(0,20){3}{\path(1,0)(-1,0)}
\multiput(0,0)(20,0){9}{\path(0,1)(0,-1)}
\put(0,20){\put(1,1){\makebox(0,0)[bl]{$a_1$}}}
\put(20,20){\put(0,1){\makebox(0,0)[b]{$a_2$}}}
\put(40,20){\put(0,1){\makebox(0,0)[b]{$(1,2)$}}}
\put(60,20){\circle*{1}\put(0,1){\makebox(0,0)[b]{$a_3$}}}
\put(80,20){\circle*{1}\put(0,1){\makebox(0,0)[b]{$a_4$}}}
\multiput(0,20)(60,0){3}{
\put(0,0){\circle*{1}}
\put(20,0){\circle*{1}}
\put(40,0){\filltype{white}\circle*{1}\circle{1}}
}
\multiput(0,40)(20,0){9}{\circle*{1}}
\put(60,0){\circle*{1}\put(0,-2){\makebox(0,0)[t]{$a'=(0,3)$}}}
\put(120,0){\circle*{1}}
\put(-1,-1){\makebox(0,0)[tr]{$0$}}
\put(20,-2){\makebox(0,0)[t]{$1$}}
\put(40,-2){\makebox(0,0)[t]{$2$}}
\put(80,-2){\makebox(0,0)[t]{$4$}}
\put(100,-2){\makebox(0,0)[tr]{$5$}}
\put(120,-2){\makebox(0,0)[t]{$6$}}
\put(140,-2){\makebox(0,0)[t]{$7$}}
\put(160,-2){\makebox(0,0)[t]{$8$}}
\put(-2,20){\makebox(0,0)[r]{$1$}}
\put(169,-1){\makebox(0,0)[t]{$t_2$}}
\put(1,49){\makebox(0,0)[l]{$t_1$}}
\put(70,42){\makebox(0,0)[b]{$C_{H}$}}
\end{picture}
\end{center}
\end{figure}

We are interested in a new element $a'\in\ZZ^2\smallsetminus C_H$ such that the toric ring $\KK[H']$ of $H'=H+\NN a'$ becomes Cohen--Macaulay. 
Note that it will be a domain regardless of the choice of $a'$.
By Proposition \ref{14}.\eqref{14d}, $a'$ has to be chosen such that $(1,2)+\NN a'$ does not meet $H'$. 
If, for instance, $a'=(0,k)$ with $k\ge3$ then pairs of elements in $C_H\cap\ZZ^2$ that differ by a multiple of $a'$ are either both inside or both outside of $H$.
With such $a'$ we have therefore $(1,2)\not\in H'-\NN a'$ and hence $(1,2)\not\in\Bar H'$.
One verifies that $\Bar H'=H'$, or equivalently
\[
H^1_{\frakm_{H'}}(\KK[H'])=(\KK[H'-\NN a_1]\cap\KK[H'-\NN a'])/\KK[H']=0,
\]
and hence $\KK[H']$ is Cohen--Macaulay. 
However the summand $\KK[H]$ of $A_2(H')$ is not Cohen--Macaulay.
\end{exa}

It is natural to wonder whether the spectral sequence \eqref{11}
always degenerates at the $E_1$-page.  This would be tantamount to the
equivalence
\[
[\forall\tau\in\dotPhi^\boldone_\calA:\KK[H_\tau]\text{ is Cohen--Macaulay}]
\quad\Leftrightarrow \quad
[\gr^\calA(\KK[H])\text{ is Cohen--Macaulay}].
\]
The purpose of the following example is to show that this is not the case.

\begin{exa}\label{52}
Let $H$ be the affine semigroup generated by 
\[
\calA=\{a_1,\dots,a_6\}=
\left\{
\begin{pmatrix}0\\2\end{pmatrix},
\begin{pmatrix}0\\3\end{pmatrix},
\begin{pmatrix}-1\\1\end{pmatrix},
\begin{pmatrix}-1\\2\end{pmatrix},
\begin{pmatrix}-2\\2\end{pmatrix},
\begin{pmatrix}1\\1\end{pmatrix}
\right\}.
\] 
There are two facet cones $C_\sigma=\QQ_+a_1+\QQ_+a_3$ and $C_{\sigma'}=\QQ_+a_1+\QQ_+a_6$ where
\[
\KK[H_{\sigma'}]\cong\KK[y_1,y_2,y_6]/\ideal{y_1^3-y_2^2}
\]
is a complete intersection.
\begin{figure}[ht]
\caption{The $(\calA,\boldone)$-polyhedron for Example~\ref{52}}
\begin{center}
\setlength{\unitlength}{0.65mm}
\begin{picture}(80,80)(0,0)
\put(50,0){\texture{
88888888 88888888 88888888 88888888 88888888 88888888 88888888 88888888 
88888888 88888888 88888888 88888888 88888888 88888888 88888888 88888888 
88888888 88888888 88888888 88888888 88888888 88888888 88888888 88888888 
88888888 88888888 88888888 88888888 88888888 88888888 88888888 88888888}
\shade\path(-47,47)(0,0)(0,70)\shade\path(23,23)(0,0)(0,70)
\dottedline{1}(-47,47)(-51,51)
\dottedline{1}(0,70)(0,75)
\dottedline{1}(23,23)(27,27)
\texture{
99999999 99999999 99999999 99999999 99999999 99999999 99999999 99999999 
99999999 99999999 99999999 99999999 99999999 99999999 99999999 99999999 
99999999 99999999 99999999 99999999 99999999 99999999 99999999 99999999 
99999999 99999999 99999999 99999999 99999999 99999999 99999999 99999999}
\shade\path(0,0)(-40,40)(0,60)(20,20)(0,0)
\thicklines\path(-40,40)(0,60)(20,20)}
\put(0,0){\vector(1,0){80}}
\multiput(10,0)(20,0){4}{\path(0,1)(0,-1)}
\put(50,0){
\put(0,0){\vector(0,1){70}}
\multiput(0,0)(0,20){4}{\path(1,0)(-1,0)}
}
\put(50,0){
\put(0,40){\circle*{1}\put(-2,0){\makebox(0,0)[r]{$a_1$}}}
\put(0,60){\circle*{1}\put(-1,1){\makebox(0,0)[br]{$a_2$}}}
\put(-20,20){\circle*{1}\put(-1,-1){\makebox(0,0)[tr]{$a_3$}}}
\put(-20,40){\circle*{1}\put(0,2){\makebox(0,0)[b]{$a_4$}}}
\put(-40,40){\circle*{1}\put(0,2){\makebox(0,0)[b]{$a_5$}}}
\put(20,20){\circle*{1}\put(2,0){\makebox(0,0)[l]{$a_6$}}}}
\put(50,0){
\put(-40,-2){\makebox(0,0)[t]{$-2$}}
\put(-20,-2){\makebox(0,0)[t]{$-1$}}
\put(0,-2){\makebox(0,0)[t]{$0$}}
\put(20,-2){\makebox(0,0)[t]{$1$}}
\put(2,20){\makebox(0,0)[l]{$1$}}
\put(2,40){\makebox(0,0)[l]{$2$}}
\put(2,60){\makebox(0,0)[l]{$3$}}
\put(29,-1){\makebox(0,0)[t]{$t_1$}}
\put(1,69){\makebox(0,0)[l]{$t_2$}}
\put(10,41){\makebox(0,0)[bl]{$\sigma'$}}
\put(-21,50){\makebox(0,0)[br]{$\sigma$}}}
\end{picture}
\end{center}
\end{figure}
On the other hand, the coordinate transformation $(k_1,k_2)\mapsto(k_1,k_1+k_2)$ in $\ZZ^2$ reveals that 
\[
\KK[H_\sigma]\cong\KK[y_1,y_2,y_3,y_4]/\ideal{y_3^2y_1-y_4^2,y_3^3y_2-y_4^3,y_1^3-y_2^2,y_2y_3-y_4y_1}
\]
which is not Cohen--Macaulay since $H^1_{\frakm_\sigma}(\KK[H_\sigma])$ is the one-dimensional $\KK$-vector space generated by the \v Cech cocycle
\[
(y_2/y_1)\oplus(y_4/y_3)\in\KK[H_\sigma-\NN a_1]\oplus\KK[H_\sigma-\NN a_3].
\]
Let $\tau=\sigma\cap\sigma'$, then the natural map $H^1_{\frakm_\sigma}(\KK[H_\sigma])\to H^1_{\frakm_\tau}(\KK[H_\tau])$ induced by the map $\KK[H_\sigma]\onto\KK[H_\tau]$ of $\KK[H_\sigma]$-modules is injective since
\[
H^1_{\frakm_\tau}(\KK[H_\tau])=\KK[H_\tau-\NN a_1]/\KK[H_\tau]\cong \KK\cdot(y_2/y_1)\oplus\bigoplus_{k\ge1}\KK\cdot(y_2/y_1^2)^k
\] 
as one immediately verifies.
It follows that in the spectral sequence \eqref{11} the only potentially non zero differential
$d_1^{0,1}\colon E^{0,1}_1\to E^{1,1}_1$ is in fact injective. 
Thus the $E_2$-page has only terms on the diagonal $p+q=2$ and it follows that $\gr^\calA(\KK[H])$ is Cohen--Macaulay while the summand $\KK[H_\sigma]$ of $A_2(H)$ is not.
\end{exa}

\begin{rmk}
Using the commands {\tt dim} and {\tt depth} in {\sc Singular} \cite{GPS05}, one can verify that $\KK[H]$ in Example \ref{15} is Cohen--Macaulay while $\gr^\calA(\KK[H])$ is not.
By Proposition \ref{14}, $\KK[H_{\sigma_2}]$ is not Cohen--Macaulay because $a_2-a_1=(1,0,0)\in\Bar H_{\sigma_2}\smallsetminus H_{\sigma_2}$.
Here we choose $\calB=\{a_1,a_4\}$ in Definition \ref{10} and observe that $a_2-a_1+a_4=a_7$.
However, one can verify as in Example \ref{23} that $a_2-a_1$ is not
an actual obstruction to the Cohen--Macaulayness of $\KK[H]$.
\end{rmk}

From \cite{Kou76} the following implications are known:
\begin{gather*}
\KK[H_\tau]\text{ is Cohen--Macaulay }\forall\tau\in\dot\Phi^{\boldone}_\calA\hspace{40ex}\\
\Longrightarrow\quad\gr^\calA(\KK[H])\text{ is Cohen--Macaulay}\hspace{0ex}\\
\hspace{40ex}\Longrightarrow\quad\KK[H]\text{ is Cohen--Macaulay}.
\end{gather*}
The examples in this section show that neither implication can be reversed in general. 
In particular, the proofs for \cite[Prop.~3.1]{Oku06a} and \cite[Prop.~3.4]{Oku06b} are faulty. However, the statements of these propositions themselves are correct by \cite[Cor.~9.2]{MMW05} which is based on vanishing theorems for Euler--Koszul homology on toric modules.

In the sequel we give a Newton filtration proof for \cite[Prop.~3.1]{Oku06a} and \cite[Prop.~3.4]{Oku06b}. 
This generalizes the approach of \cite[Thm.~1.2]{Ado99} in the case of normal $H$.

By positivity of $H$ there is a linear form $\ell\colon \ZZ H\to\ZZ$ such that $\ell(\calA)\subseteq\NN\smallsetminus\{0\}$. 

\begin{lem}\label{26}
For a suitable $q\in\NN\smallsetminus\{0\}$, the affine hyperplane $P_q=\ell^{-1}(q)$ intersects the one-dimensional faces of $C_H$ in elements of $H$. 
\end{lem}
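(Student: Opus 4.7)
The plan is to exploit the finiteness of the extremal rays of $C_H$ and the positivity hypothesis on $\ell$ to produce $q$ as a simple least common multiple.

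First I would observe that $C_H=\QQ_+\calA$ is a finitely generated rational cone, so it has only finitely many one-dimensional faces $\rho_1,\dots,\rho_s$. Because $\calA$ generates $C_H$ as a cone, every extremal ray $\rho_i$ must be spanned by some element of $\calA$ (an element on $\rho_i$ cannot be written as a positive combination of elements off the ray). In particular, for each $i$ there exists $a_i\in\calA\subseteq H$ with $\rho_i=\QQ_+a_i$.

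Next, by hypothesis $\ell(a_i)\in\NN\smallsetminus\{0\}$ for every $i$. Set
\[
q=\lcm\bigl(\ell(a_1),\dots,\ell(a_s)\bigr)\in\NN\smallsetminus\{0\}.
\]
Then $k_i:=q/\ell(a_i)\in\NN\smallsetminus\{0\}$, and the element $h_i:=k_i a_i$ lies in $H$ (being an integer multiple of $a_i\in H$), lies on the ray $\rho_i$, and satisfies $\ell(h_i)=k_i\ell(a_i)=q$. Thus $h_i\in P_q\cap\rho_i\cap H$, which gives the desired intersection property.

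There is no real obstacle here; the statement is essentially a bookkeeping consequence of the fact that extremal rays of a finitely generated rational cone are generated by elements of any generating set, together with the positivity of $\ell$ on $\calA$. The only thing to be slightly careful about is that the chosen witness on each ray is genuinely in $H$ rather than merely in $\ZZ H\cap\rho_i$; this is guaranteed by picking the witnesses from $\calA$ itself.
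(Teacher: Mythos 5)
Your proof is correct and is essentially the paper's argument: both take $q$ to be an $\lcm$ of $\ell$-values of elements of $\calA$ so that each ray $\QQ_+a$, $a\in\calA$, meets $P_q$ in $\NN a\subseteq H$, and rely on the fact that every extremal ray of $C_H$ is spanned by some $a\in\calA$. The only (immaterial) difference is that you restrict the $\lcm$ to the extremal-ray generators, whereas the paper takes $q=\lcm(\ell(\calA))$ over all of $\calA$.
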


\begin{proof}
Let $q=\lcm(\ell(\calA))>0$.
Then $\ell(qa/\ell(a))=q$ and $qa/\ell(a)\in\NN a$.
Thus $P_q\cap\NN a\ne\emptyset$ for all $a\in\calA$ and the claim follows.
\end{proof}

Note that natural multiples of the $q$ from Lemma~\ref{26} also satisfy the conclusion of the lemma. 
As $H$ is Noetherian, there is $\ell\colon \QQ H\to\QQ$ with $\ell(H)\geq 0$, satisfying Lemma \ref{26} with $q=1$, for which
\[
\calA=\{h\in H\mid\ell(h)\le 1\}
\]
is a finite set of generators of $H$.
Thus, $\Delta_\calA^\boldone=C_H\cap\ell^{-1}([0,1])$ and $\Phi^{\boldone,r-1}_{\calA}=\{C_H\cap\ell^{-1}(1)\}$ is a singleton.
In this case, Algorithm \ref{7} terminates trivially with $\calA'=\calA$ returning the Newton weight vector
\[
L=(\deg_{\calA}(a))_{a\in\calA}=(\ell(a))_{a\in\calA}.
\]
Then, by Lemma \ref{3},
\[
\gr^L(\KK[F_{\calA}]/I_{\calA})=\gr^{\calA}(\KK[H])\cong\KK[H].	
\]
For any other set $\calA'$ of generators for $\KK[H]$ with corresponding presentation $\KK[F_{\calA'}]\to \KK[H]$ one can define a positive weight vector $L'$ on $\calA'$ by $L'_{a'}=\deg_\calA(a')$. 
Then $\Phi^{L',r-1}_{\calA'}$ is the same singleton $\{C_H\cap\ell^{-1}(1)\}$ as $\Phi^{\boldone,r-1}_{\calA}$, $\gr^{L'}(\KK[F_{\calA'}]/I_{\calA'})\cong \KK[H]$. 

The preceding arguments can be summarized as follows.

\begin{prp}\label{29}
On any presentation \eqref{28} of a positive semigroup $H$, there is a strictly positive weight vector $L$ such that $\gr^L(\KK[F_\calA]/I_\calA)\cong\KK[H]$.\qed
\end{prp}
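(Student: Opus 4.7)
The plan is to show that the proposition follows by pulling back a strictly positive $\QQ$-linear functional on $\QQ H$, bypassing any Newton-polytope combinatorics. By positivity of $H$ there is some $\ell\in\Hom_\QQ(\QQ H,\QQ)$ with $\ell(h)>0$ for every $h\in H\smallsetminus\{0\}$; indeed, after clearing denominators the construction in Lemma~\ref{26} (along with the remark that natural multiples also work) supplies such a functional directly. Given the presentation \eqref{28} via $\calA$, define $L_a:=\ell(a)$ for each $a\in\calA$. Since $0\notin\calA$, this $L$ is strictly positive, which is the weight I will claim works.

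The entire content of the proof then reduces to one additivity observation. Because $\ell$ is linear on $\QQ H$, for every $p\in F_\calA$ we have
\[
\deg_L(p)=\sum_{a\in\calA}L_ap_a=\sum_{a\in\calA}\ell(a)p_a=\ell(\varphi_\calA(p)),
\]
so $\deg_L(p)$ depends only on the image $\varphi_\calA(p)\in H$. In particular, whenever $(p,q)\in E_\calA$ one has $\deg_L(p)=\deg_L(q)$, which means that each of the binomial generators $y^p-y^q$ of the toric ideal $I_\calA$ from \eqref{19} is already $L$-homogeneous. Hence $I_\calA$ is an $L$-homogeneous ideal, so $\gr^L(I_\calA)=I_\calA$, and \eqref{40} yields
\[
\gr^L(\KK[F_\calA]/I_\calA)=\KK[F_\calA]/\gr^L(I_\calA)=\KK[F_\calA]/I_\calA\cong\KK[H].
\]

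There is essentially no obstacle; the only point requiring care is the existence of the linear form $\ell$, which is exactly what positivity of $H$ provides and was already used in Lemma~\ref{26}. The reason the argument avoids the combinatorial machinery of the preceding sections is that $L$ is here the restriction of a genuine $\QQ$-linear functional on $\ZZ H$ — not merely a subadditive Newton degree — so the $L$-degree on $F_\calA$ is blind to the choice of representative in the fibers of $\varphi_\calA$, which is exactly what forces the defining relations of $\KK[H]$ to be $L$-homogeneous.
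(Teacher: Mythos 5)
Your argument is correct, and it is genuinely more elementary than the paper's. Both proofs end up with the same weight vector $L=(\ell(a))_{a\in\calA}$ for a strictly positive linear form $\ell$ on $\QQ H$, but they arrive there differently. The paper stays inside the Newton-filtration framework it has been developing: it first manufactures a distinguished generating set $\calA_0=\{h\in H\mid\ell(h)\le 1\}$ whose $(\calA_0,\boldone)$-umbrella consists of a single facet $C_H\cap\ell^{-1}(1)$, so that Algorithm~\ref{7} terminates immediately, $\deg_{\calA_0}=\ell$ on $H$, and $\gr^{\calA_0}(\KK[H])\cong\KK[H]$ by Lemma~\ref{3}; it then transfers to the given $\calA$ by setting $L_a=\deg_{\calA_0}(a)=\ell(a)$ and invoking Theorem~\ref{34} with this singleton umbrella. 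You cut through all of this by observing that linearity of $\ell$ makes $\deg_L$ factor through $\varphi_\calA$, so every binomial $y^p-y^q$ spanning $I_\calA$ is already $L$-homogeneous, whence $\gr^L(I_\calA)=I_\calA$ and the conclusion is immediate from \eqref{40}. Your route needs neither the polytope nor the umbrella combinatorics; what the paper's longer route buys is that the statement is exhibited as a special case of its Newton-graded machinery (the case of a one-facet umbrella), which is thematically the point of the section. One small remark: the strict positivity of $L$ uses $0\notin\calA$, which the paper assumes tacitly throughout; it is worth saying so explicitly, as you did.
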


\medskip

We return to the $A$-hypergeometric system $M_A(\beta)$ defined in the introduction where $A$ is an 
integer $d\times n$ matrix and $\beta$ a complex $d$-vector.
The leading actor in the homological study of the $A$-hypergeometric
system is the \emph{Euler--Koszul complex} $K_\bullet(S_A,\beta)$
whose $0$-th homology module is $M_A(\beta)$ \cite{MMW05}.
For a $\ZZ^d$-graded $R_A$-module $N$, $K_\bullet(N,\beta)$ is by
definition the Koszul complex of the $d$ commuting left
$D_A$-endomorphisms $E-\beta$ on $D_A\otimes_{R_A}N$ defined by
$E_i\circ y=(E_i-\deg_i(y))y$ for all $\ZZ^d$-homogeneous $y\in M$ and
by $\CC$-linear extension.

\begin{cor}
Assume that the columns of $A\in\CC^{d\times n}$ span a positive affine semigroup $\NN A$ such that $\ZZ A=\ZZ^d$ and that $S_A=\CC[\NN A]$ is a Cohen--Macaulay ring. 
Then the Euler--Koszul complex $K_\bullet(S_A,\beta)$ is acyclic for all $\beta\in\CC^d$.
\end{cor}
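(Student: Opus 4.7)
The plan is to extend Adolphson's Newton filtration argument from the normal case \cite{Ado99} to arbitrary Cohen--Macaulay toric rings, using Proposition~\ref{29} to supply a good filtration whenever the canonical $\boldone$-filtration fails.

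First, I would apply Proposition~\ref{29} to the presentation $R_A\onto S_A$ to obtain a strictly positive weight vector $L=(L_1,\ldots,L_n)$ on the columns of $A$ such that $\gr^L(R_A/I_A)\cong S_A$ as $\CC$-algebras. Then I would extend $L$ to a filtration on the Weyl algebra $D_A$ by fixing a constant $c>\max_j L_j$ and declaring
\[
\deg_L(x_j)=c-L_j>0,\qquad \deg_L(\del_j)=L_j>0.
\]
Under this choice every monomial $x_j\del_j$ has weight $c$, so each Euler operator $E_i=\sum_j a_{i,j}x_j\del_j$ is $L$-homogeneous of weight $c$, while the scalar $\beta_i$ has weight $0<c$. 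The commutation relation $[\del_j,x_k]=\delta_{j,k}$ equates a weight-$c$ element to a weight-$0$ one, hence is killed in the associated graded, and $\gr^L(D_A)$ becomes the commutative polynomial ring $\CC[x,\xi]$ with the induced weights.

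Next, I would propagate the filtration to the $D_A$-module $D_A\otimes_{R_A}S_A=D_A/D_A\cdot I_A$ and identify its associated graded as
\[
\gr^L(D_A\otimes_{R_A}S_A)\cong \CC[x,\xi]/\gr^L(I_A)\cdot\CC[x,\xi]\cong\CC[x]\otimes_\CC S_A=S_A[x],
\]
where the last isomorphism is precisely Proposition~\ref{29}. Under this identification the principal $L$-symbol of $E_i-\beta_i$ is the whole Euler expression $\sigma(E_i)=\sum_j a_{i,j}x_jt^{a_j}\in S_A[x]$, because every summand of $E_i$ shares the top weight $c$ while $\beta_i$ is strictly subleading. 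Consequently, $K_\bullet(S_A,\beta)$ becomes a filtered complex whose associated graded is the ordinary commutative Koszul complex $K_\bullet(\sigma(E);S_A[x])$.

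Finally I would cash in the Cohen--Macaulay hypothesis: as noted in the introduction, the $\sigma(E_i)$ form a subset of a homogeneous system of parameters on $S_A[x]$; since $S_A$ is Cohen--Macaulay, so is the polynomial extension $S_A[x]$, and a partial system of parameters on a Cohen--Macaulay ring is a regular sequence. Thus $K_\bullet(\sigma(E);S_A[x])$ is acyclic in positive degrees, and by the standard convergence argument for a bounded complex equipped with a discrete exhaustive filtration, acyclicity lifts to the original complex $K_\bullet(S_A,\beta)$. The chief technical point I expect to labor over is the identity $\gr^L(D_A\cdot I_A)=\gr^L(D_A)\cdot\gr^L(I_A)$ needed to guarantee that the associated graded module really equals $S_A[x]$; I would handle this via the PBW decomposition $D_A=\bigoplus_\alpha x^\alpha\cdot R_A$, which reduces the question of computing initial forms to the commutative subring $R_A$ where Proposition~\ref{29} applies directly.
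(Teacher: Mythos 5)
Your proposal is correct and follows essentially the same route as the paper: apply Proposition~\ref{29} to get a strictly positive $L$ with $\gr^L(R_A/I_A)\cong S_A$, extend to a filtration on $D_A$ by $\deg_L(x_j)=c-L_j$, identify $\gr^L(D_A\otimes_{R_A}S_A)$ with $\CC[x]\otimes_\CC S_A$, observe that the $L$-symbols of $E-\beta$ are the Euler expressions $\sum_j a_{i,j}x_j t^{a_j}$ forming a partial system of parameters, and use Cohen--Macaulayness to conclude the graded Koszul complex is acyclic and pass back to the ungraded complex. The only difference is presentational: you flag the need for $c>\max_j L_j$ and for $\gr^L(D_A\cdot I_A)=\gr^L(D_A)\cdot\gr^L(I_A)$ via the PBW decomposition, points the paper leaves implicit.
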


\begin{proof}
By Proposition~\ref{29}, there is a strictly positive weight vector $L$ on $\del$ such that $\gr^L(R_A/I_A)\cong S_A$.
Choose a rational $c>0$ and set $\deg_L(x_j)=c-\deg_L(\del_j)$ for all $j$. 
One obtains an induced filtration on $D_A$ with $L_k(R_A)=L_k(D_A)\cap R_A$ satisfying $\gr^L(D_A)=\CC[x,\del]$, and so
\begin{equation}\label{54}
\gr^L(D_A\otimes_{R_A}S_A)=\CC[x]\otimes_\CC S_A.
\end{equation}
The $L$-symbols $E$ of the Euler operators $E-\beta$ form a system of parameters on \eqref{54}, cf.~\cite{GKZ89}.
As $S_A$ is Cohen--Macaulay by hypothesis, the Koszul complex of $E$ on \eqref{54} is a resolution. 
The latter being the $L$-graded Euler--Koszul complex $\gr^L(K_\bullet(S_A,\beta))$, acyclicity of the ungraded complex $K_\bullet(S_A,\beta)$ follows.
\end{proof}

\begin{rmk}
Note the ``limit case'' $c\to\infty$, which corresponds to the weight
filtration on $D_A$ given by $(\boldone,\boldzero)$.  This filtration
has been curiously overlooked as a an elementary tool to show
exactness of the Euler--Koszul complex for Cohen--Macaulay $S_A$.
\end{rmk}


\bibliographystyle{amsalpha}
\bibliography{cmtr}

\end{document}